\documentclass[a4paper,10pt]{article}

\usepackage[dvipdfmx,%
 bookmarks=true,%
 bookmarksnumbered=true,%
 colorlinks=true,%
 pdftitle={},%
 pdfauthor={Yokoyama},%
 pdfsubject={},%
pdfkeywords={}]{hyperref}

\makeatletter
\def\blfootnote{\gdef\@thefnmark{}\@footnotetext}
\makeatother

\usepackage{amsthm}
\usepackage{latexsym}
\usepackage{amsmath}
\usepackage{amscd}
\usepackage{amssymb}
\usepackage{authblk}

\usepackage{url}

\newcommand{\la}[0]{\langle}
\newcommand{\ra}[0]{\rangle}
\newcommand{\restrict}[0]{{\upharpoonright}}
\newcommand{\plus}[0]{{\rm +}}
\newcommand{\hyphen}[0]{\mathchar`-}
\newcommand{\defined}[0]{{\downarrow}}

\newcommand{\N}[0]{\mathbb{N}}

\newcommand{\cS}[0]{\mathcal S}

\newcommand{\thmref}[1]{Theorem \ref{#1}}

\newcommand{\propref}[1]{Proposition \ref{#1}}

\newcommand{\claimref}[1]{Claim \ref{#1}}

\newcommand{\condition}[1][s-1]{(F^0_{#1},\dots,F^{k-1}_{#1},I_{#1})}

\newcommand{\negareq}[1][e]{N^i_{#1}}

\newcommand{\posireq}[1][e]{R_{#1}^i}

\newcommand{\Enega}[1][n]{E_{#1}^{i,-}}
\newcommand{\Eposi}[1][n]{E_{#1}^{i,+}}
\newcommand{\enuposi}[3][i]{\widetilde{J_{{#2}^{#1}_{#3}}}}
\newcommand{\stageindex}[3][i]{{#2}^{#1}_{#3}}

\newcommand{\finsub}[0]{\subseteq_{\text{fin}}}

\newcommand{\T}[0]{\mathrm{T}}

\newcommand{\TJ}[0]{\mathrm{TJ}}

\newcommand{\RCAo}{\mathsf{RCA_0}}
\newcommand{\WKLo}[0]{\mathsf{WKL_0}}
\newcommand{\WKLs}[0]{\mathsf{WKL_0^{*}}}
\newcommand{\RCAs}{\mathsf{RCA_0^{*}}}
\newcommand{\ACAo}{\mathsf{ACA}_0}


\newcommand{\wkl}[0]{\mathrm{WKL}}
\newcommand{\RTtt}[0]{\mathrm{RT}^2_2}
\newcommand{\RTt}[1][\empty]{\mathrm{RT}^2_{#1}}

\newcommand{\BII}[0]{{\mathrm{B}\Sigma^0_2}}
\newcommand{\BIII}[0]{\mathrm{B}\Sigma^0_3}

\newcommand{\BN}[1][n]{\mathrm{B}\Sigma^0_{#1}}

\newcommand{\IN}[1][n]{\mathrm{I}\Sigma^0_{#1}}

\newcommand{\INX}[2]{\mathrm{I}\Sigma^{#2}_{#1}}
\newcommand{\In}[2]{\mathrm{I}\Sigma^{#1}_{#2}}
\newcommand{\Bn}[2]{\mathrm{B}\Sigma^{#1}_{#2}}
\newcommand{\EM}{\mathrm{EM}}
\newcommand{\EMinf}{\mathrm{EM}_{<\infty}}

\newcommand{\sEMinf}{\mathrm{sEM}_{<\infty}}
\newcommand{\DAN}[1][n]{\mathrm{D}^2_{#1}}

\newcommand{\COH}{\mathrm{COH}}
\newcommand{\PA}{\mathrm{PA}}

\newcommand{\effcodf}[0]{\mathcal{U}}
\newcommand{\effcods}[0]{\mathcal{W}}


\newcommand{\Dzt}[1][\empty]{\Delta^0_2{#1}}

\newcommand{\sgz}[1][n]{\Sigma^0_{#1}}
\newcommand{\piz}[1][n]{\Pi^0_{#1}}
\newcommand{\sgo}[1][n]{\Sigma^1_{#1}}
\newcommand{\pio}[1][n]{\Pi^1_{#1}}

\usepackage{amsmath,amssymb,amsmath}
\usepackage{amsthm}
\usepackage[abbrev]{amsrefs}
\usepackage{latexsym}
\usepackage{comment}
\usepackage{amsfonts}

\pagestyle{plain}

\setlength{\textheight}{23cm}
\setlength{\textwidth}{15cm}
\setlength{\topmargin}{-0.5cm}
\setlength{\oddsidemargin}{0.5cm}
\setlength{\evensidemargin}{0.5cm}

\theoremstyle{definition}
\newtheorem{thm}{Theorem}[section]
\newtheorem{prop}[thm]{Proposition}
\newtheorem{lem}[thm]{Lemma}
\newtheorem{cor}[thm]{Corollary}
\newtheorem{dfn}[thm]{Definition}
\newtheorem{ex}[thm]{Example}
\newtheorem{rem}[thm]{Remark}

\newtheorem{claim}[thm]{Claim}



\usepackage{xcolor}

\definecolor{lightred}{rgb}{1,.60,.60}


\begin{document}

\title{Low-like basis theorems for Ramsey's theorem\\ for pairs in first-order arithmetic}

\author[1]{Hiroyuki Ikari}
\author[2]{Keita Yokoyama}

\affil[1]{\small AXA Life insurance Co., Ltd.%
\footnote{This research was conducted independently of the author's affiliation.}
\\  \sf{hiroyuki.ikari.q8@alumni.tohoku.ac.jp}}
\affil[2]{\small Mathematical Institute, Tohoku University\\ \sf{keita.yokoyama.c2@tohoku.ac.jp}}

\date{}
\maketitle

\begin{abstract}
  We construct an $\ll^2$-solution (also known as a weakly low solution) to $\DAN[\empty]$ within $\BN[3]$ and prove the $\ll^2$-basis theorem for $\RTt$ over $\BN[3]$.
  The $\ll^2$-basis theorem is a variant of the low basis theorem, which has recently received focus in the context of the first-order part of Ramsey type theorems.
  For the construction, we use Mathias forcing in an effectively coded $\omega$-model of $\WKLo$ to ensure sufficient computability under the system with weaker induction.
  Using a similar method, we also show the $\ll^2$-basis theorem for $\RTtt$ and $\EMinf$, a version of Erd\H{o}s-Moser principle, within $\IN[2]$.
  These results provide simpler proofs of known results on the $\Pi^1_1$-conservativities of $\RTt, \RTtt$ and $\EMinf$ as corollaries.
\end{abstract}

\section{Introduction}
In this paper, we reconstruct low-like solutions to Ramsey's theorems for pairs within arithmetic theories with weak induction.
To this end, we investigate low-like basis theorems for the decompositions of Ramsey's theorem for pairs, specifically $\COH, \DAN[2], \DAN[\empty]$, and $\sEMinf$, which have been extensively studied in the analysis of the strength of Ramsey's theorem for pairs.

The strength of Ramsey's theorem for pairs and its variants, particularly $\RTtt$ and $\RTt$, has attracted significant attention as these principles lie strictly between $\RCAo$ and $\ACAo$ yet are not equivalent to $\WKLo$.
In particular, their first-order consequences---number-theoretic statements expressible in the language of first-order arithmetic---have been widely investigated.
Let us briefly review some representative results regarding their strength.
Note that all the results cited here employ $\RCAo$ as the base theory.
Based on the computability-theoretic analysis of Ramsey's theorem by Jockusch \cite{Jockusch}, Simpson \cite{sosoa} showed that $\ACAo$ is equivalent to $\mathrm{RT}^n$ (for $n \geq 3$), while $\WKLo$ does not prove $\RTtt$.
Subsequently, Seetapun and Slaman \cite{SeeSla} showed that $\RTt$ does not prove $\ACAo$, and Liu \cite{Liu} later demonstrated that $\RTt$ does not prove $\WKLo$.
A natural question thus arises: how can we best characterize the strength of $\RTtt$ and $\RTt$?
One important perspective is to calibrate the amount of induction or other first-order (arithmetical) statements derivable from them.
From this viewpoint, Hirst \cite{Hirst} showed that $\RTt$ implies $\BIII$.
In a seminal paper \cite{CJS}, Cholak, Jockusch, and Slaman showed that $\RTtt \plus \IN[2]$ is $\Pi^1_1$-conservative over $\IN[2]$ and that $\RTt \plus \IN[3]$ is $\Pi^1_1$-conservative over $\IN[3]$.

In recent years, there has been significant progress in the study of their first-order strength.
A central open question concerning the first-order part of $\RTtt$ is whether $\RTtt \plus \RCAo$ is $\Pi^1_1$-conservative over $\BII$.
Patey and Yokoyama \cite{PatYoko} provided a partial answer to this question by showing that $\RTtt$ is $\Pi^0_3$-conservative over $\BII$, thereby isolating its proof-theoretic strength.
This result was recently improved to $\Pi^0_4$-conservativity \cite{LPY-Pi04}.
Moreover, the $\Pi^{1}_{1}$-conservativity of $\RTtt$ over $\BII \plus \bigcup_{n}\mathrm{WF}(\omega_{n})$ has also been established \cite{LPY-TAMS2025}.
Regarding the strength of $\RTt$, Slaman and Yokoyama \cite{SlYo} established the $\Pi^1_1$-conservativity of $\RTt \plus \WKLo$ over $\BIII$, thus determining the first-order part of $\RTt$.

Conservation proofs over $\BII$ and $\BIII$ typically employ model constructions based on low$_2$-basis theorems via the ``second-jump control'' technique established in \cite{CJS} (e.g., \cite{SlYo,EM}).
On the other hand, in certain cases, a simpler ``first-jump control'' argument is available over $\BII$ (e.g., \cite{CSY-weak, LPY-TAMS2025}), yielding conservation proofs based on weakly-low solution constructions ($\ll^{2}$-basis theorems, in our terminology).
Recent work by Fiori-Carones, Ko\l{}odziejczyk, Wong, and Yokoyama \cite{iso}, based on the isomorphism theorem for $\WKLs$, demonstrates that $\Pi^{1}_{1}$-conservation over $\BII$ or $\BIII$ can be characterized by $\ll^n$-basis theorems.

In this paper, we pursue this line of research.
We emulate the known constructions for weakly-low solutions to the decompositions of $\RTt$ and $\RTtt$, as well as a new construction for $\EMinf$.
While our main results concern the provability of the $\ll^2$-basis theorem (\thmref{thm:headerforbasistheorem}), we also obtain related $\Pi^1_1$-conservativity results as corollaries.
These results unify those of \cite{CJS, SlYo}, and Towsner and Yokoyama \cite{EM}.
Moreover, when combined with techniques of formalized forcing, our results lead to additional results on proof transformations.
The details will appear in a forthcoming joint paper with Ko\l{}odziejczyk.

\subsection*{Structure of this paper}
Section \ref{pre} is devoted to preliminaries, where we introduce the basic setting of subsystems of second-order arithmetic and formalized discussions within them.
This section also introduces the $\ll^2$-relation and countable coded $\omega$-models of $\wkl$, which play important roles in this paper.
\par
Section \ref{constructions} constitutes the core of the paper, where we construct $\ll^2$-solutions to the splits of $\RTtt$ and $\RTt$.
We explicitly present constructions for $\COH, \EMinf$, and $\DAN[\empty]$.%
\footnote{As for $\COH$, the essence of the proof is not original; it is obtained by formalizing well-known constructions for low-like solutions to $\COH$ (e.g., \cite[Theorem 6.44]{slice}). Although already known, we present the construction to lay the groundwork for a unified treatment of the subsequent proofs, including our main theorem.}

\section{Preliminaries}\label{pre}
In this paper, we employ the usual setting for subsystems of the second-order arithmetic. (See \cite{sosoa} for the details.)
The language of second-order arithmetic is 2-sorted and consists of $0,1, \plus, \cdot, =, <$ and $\in$.
{\it Terms} and {\it formulas} are defined accordingly.
The classes of formulas, $\sgz, \piz, \sgo$ and $\pio$ are defined as follows.
$\sgz[0]$ and $\piz[0]$ are the class of all formulas which contain no quantifiers but bounded quantifiers.
$\sgz[n+1]$ is the class of all formulas of the form $\exists x \varphi$ for $\varphi \in \piz[n]$.
$\piz[n+1]$ is the class of all formulas of the form $\forall x \varphi$ for $\varphi \in \sgz[n]$.
$\sgo[0]$ and $\pio[0]$ are the same class defined as $\bigcup_{n \in \omega}\sgz[n]$.
For a fixed set variable $X$, a class $\Sigma^{X}_{n}$ (resp.~$\Pi^{X}_{n}$) is a class of $\sgz[n]$ (resp.~$\piz[n]$) formulas which do not contain free set variables other than $X$.
We say a formula $\varphi$ is {\it arithmetical} when it is an element of $\Sigma^1_0$.
$\sgo[n+1]$ is the class of all formulas of the form $\exists X \varphi$ for $\varphi \in \pio[n]$.
$\pio[n+1]$ is the class of all formulas of the form $\forall X \varphi$ for $\varphi \in \sgo[n]$.

For a class of formulas $\Gamma$, $\mathrm{I}\Gamma$ and $\mathrm{B}\Gamma$, {\it induction} and {\it bounding principles} are the scheme consisting of the following for the formulas $\varphi \in \Gamma$.
\begin{align*}
(\mathrm{I}\varphi): \varphi(0) \land \forall n(\varphi(n) \to \varphi(n+1) ) \to \forall n \varphi(n), \\
(\mathrm{B}\varphi): \forall i < a \exists j \varphi(i,j) \to \exists b (\forall i <a \exists j <b \varphi(i,j)).
\end{align*}
In this paper, we often focus on $\mathrm{I}\Sigma^{X}_{n}$ and $\mathrm{B}\Sigma^{X}_{n}$. They are finitely axiomatizable with the same set variable $X$, so we will consider $\mathrm{I}\Sigma^{X}_{n}$ or $\mathrm{B}\Sigma^{X}_{n}$ is a single formula with exactly one free variable $X$.
Then $\mathrm{I}\Sigma^{0}_{n}$ and $\mathrm{B}\Sigma^{0}_{n}$ are described as $\forall X(\mathrm{I}\Sigma^{X}_{n})$ and $\forall X(\mathrm{B}\Sigma^{X}_{n})$.

The $\Gamma$-\textit{separation} scheme ($\Gamma$-$\mathrm{Sep}$) consists of the following formulas for all $\varphi,\psi \in \Gamma$.
  \[
  \forall n (\varphi(n) \wedge \psi(n)) \to
  \exists X \forall n (( \varphi(n)\to n\in X)\wedge ( \psi(n)\to n\in X)).
  \]
Finally, we regard $\PA^{-}$ as the axioms for discrete ordered semi-rings.

 Based on the above terminology, we define some systems we use in this paper.
\begin{dfn}
 Systems $\IN[n], \BN[n], \RCAo, \WKLo,$ and $\ACAo$ are defined as follows.
   \begin{align*}
&    \IN[n] := \PA^{-} \plus \mathrm{I}\Sigma^{0}_{n}, \quad
    \BN[n] := \PA^{-} \plus \mathrm{B}\Sigma^{0}_{n},\\
&    \RCAo := \IN[1] \plus \Pi^0_1 \hyphen \mathrm{Sep}, \quad
    \WKLo := \IN[1] \plus \Sigma^0_1 \hyphen \mathrm{Sep}, \quad
    \ACAo := \IN[1] \plus \Pi^0_2 \hyphen \mathrm{Sep}.
   \end{align*}

\end{dfn}
Note that $\Pi^0_1 \hyphen \mathrm{Sep}$ is equivalent to the recursive comprehension axiom $\Delta^{0}_{1}$-$\mathrm{CA}$, $\Sigma^{0}_{1}$-$\mathrm{Sep}$ is equivalent to the weak K\H{o}nig's lemma ($\wkl$, the statement that any infinite $0 \hyphen 1$ tree has an infinite path), and $\Pi^{0}_{2}$-$\mathrm{Sep}$ is equivalent to the arithmetical comprehension axiom ($\mathrm{ACA}$).

\begin{rem}
In this paper, we will mostly work within $\RCAo+\IN$ to construct low-like solutions to Ramsey-type statements (\thmref{thm:headerforbasistheorem}).
However, our constructions usually do not require the full strength of the comprehension scheme of $\RCAo$ since they are first-order statements.
Indeed, it is known that $\RCAo+\IN$ is $\Pi^{1}_{1}$-conservative extension of $\IN$, so we do not need to care the difference of those systems and may write
``provable within $\IN$'' rather than ``provable within $\RCAo+\IN$''.
\end{rem}

\subsection{Formalized computability}

We first define formalized versions of some concepts on computability theory.
Since we are mainly interested in discussions within formal systems, notions in computability should be understood as the formalized versions defined in this subsection, unless otherwise specified.

Based on Kleene's normal form theorem, we fix a universal $\Pi^0_1$ formula $\pi (e, x, A)$  with only the displayed variables, which is of the form $\forall m \pi_{0} (m, e, x, A \restrict m)$ where $\pi_{0}$ is a fixed $\Delta^0_0$ formula.

\begin{dfn}\label{dfn:reduce}
  The following definitions are made within $\RCAo$.
    For $X$ and $Y$, $X \leq_{\T} Y$ is defined as
    \begin{align*}
      \exists e_1, e_2 \forall n_{0} \exists m_{0}\forall n\le n_{0}
      & [ ( n \in X \leftrightarrow \exists m\le m_{0} \neg\pi_{0}(m, e_1, n, Y \restrict m))\\
      &\,\land ( n \notin X \leftrightarrow \exists m\le m_{0} \neg\pi_{0}(m, e_2, n, Y \restrict m)) ].
    \end{align*}
    We call these $e_1$ and $e_2$ (or their pair $\la e_1,e_2 \ra$) {\it $Y$-recursive indices (index)} of $X$.
    We write $X =_{\T} Y$ if both $X \leq_{\T} Y$ and $Y \leq_{\T} X$ hold.
We extend this definition to arithmetically definable sets $X,Y$ as long as they are \textit{regular}, in other words, their initial segments $X\restrict m, Y\restrict m$ are always available as (coded) finite sets. Note that a definable set $X$ is regular if $\Bn{X}{1}$ holds.
\end{dfn}

\begin{dfn}[Turing jump]
The Turing jump of $X$ $X'=\mathrm{TJ}(X)$ is defined as the definable set $\{ \la e, x \ra \mid \pi(e, x, X) \}$.
We also define the {\it $n$-th jump} $X^{(n)}=\mathrm{TJ}(n,X)$ for each $n\in\omega$, by $\mathrm{TJ}(0,X) = X, \mathrm{TJ}(n+1, X) = \TJ( \TJ(n, X) )$.
\end{dfn}

Within $\RCAo$, we may deal with Turing jumps as external (definable) objects. We typically need attention when we apply induction for them (see \thmref{thm:rel-induction}).
Nonetheless, the notion of (definable) low sets still play as the key concept.
\begin{dfn}[Low sets]
  A set $X$ is {\it low relative to $A$} if $X' \leq_{\T} (X \oplus A)'$ holds.
  A set $X$ is {\it low$_n$ relative to $A$} if $X^{(n)} \leq_{\T} (X \oplus A)^{(n)}$ holds.
\end{dfn}
By formalizing the universality of the Turing jump, one can easily see the following.
\begin{thm}\label{thm:rel-induction}
Let $n,m\ge 1$. Then $\RCAo$ proves the following.
\begin{enumerate}
\item $\In{X}{n}$ and $Y \leq_{\T} X$ implies $\In{Y}{n}$.
\item $\Bn{X}{n}$ and $Y \leq_{\T} X$ implies $\Bn{Y}{n}$.
\item $\In{X}{n+m}$ is equivalent to $\In{X^{(m)}}{n}$.
\item $\Bn{X}{n+m}$ is equivalent to $\Bn{X^{(m)}}{n}$.
\item If $Y$ is low$_{m}$ relative to $X$, then $\In{X}{n+m}$ implies $\Bn{X\oplus Y}{n+m}$.
\item If $Y$ is low$_{m}$ relative to $X$, then $\Bn{X}{n+m}$ implies $\Bn{X\oplus Y}{n+m}$.
\end{enumerate}
\end{thm}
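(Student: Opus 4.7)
The plan is to establish (1)--(4) as a technical core, using formula translation and a formalized relativized Post's theorem, and then to derive (5) and (6) by chaining them with the lowness hypothesis. For (1) and (2), I would observe that whenever $Y \leq_\T X$ with reduction indices $\la e_1, e_2 \ra$, the predicate ``$n \in Y$'' is provably $\Delta^X_1$ (both it and its negation are $\Sigma^X_1$). Substituting this $\Delta^X_1$-definition of $Y$ into any $\Sigma^Y_n$ or $\Pi^Y_n$ formula yields an equivalent formula of the same arithmetical level, since $\Delta^X_1$-substitution does not raise complexity. Because $\In{Y}{n}$ and $\Bn{Y}{n}$ are finitely axiomatizable as single formulas in which only $Y$ occurs freely among set variables, instantiating $\In{X}{n}$ (resp.~$\Bn{X}{n}$) at the translated formula gives what is required.

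For (3) and (4), the strategy is to formalize the relativized Post's theorem. The $\Pi^0_1$-universality of $\pi(e,x,X)$ makes ``$z \in X^{(1)}$'' a $\Pi^X_1$ predicate whose complement is $\Sigma^X_1$; iterating along $X^{(m)} = \TJ(X^{(m-1)})$, one obtains mutually complementary definitions of $X^{(m)}$ at the $m$-th level of the arithmetical hierarchy relative to $X$. A routine external induction on formula complexity then shows every $\Sigma^{X^{(m)}}_n$ formula is $\RCAo$-provably equivalent to some $\Sigma^X_{n+m}$ formula, and conversely, uniformly in parameters. Consequently each induction (bounding) instance at one level translates to one at the other. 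Since $X^{(m)}$ is only a definable class in $\RCAo$, the statement $\In{X^{(m)}}{n}$ must be read as a scheme, but the equivalence of the two schemata holds instance by instance.

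Items (5) and (6) then follow by chaining: under the lowness hypothesis $(X \oplus Y)^{(m)} \leq_\T X^{(m)}$, the assumption $\In{X}{n+m}$ (resp.~$\Bn{X}{n+m}$) yields $\In{X^{(m)}}{n}$ (resp.~$\Bn{X^{(m)}}{n}$) by (3) (resp.~(4)); this transfers to $\In{(X \oplus Y)^{(m)}}{n}$ (resp.~$\Bn{(X \oplus Y)^{(m)}}{n}$) by (1) (resp.~(2)); and applying (4) once more, with $X \oplus Y$ in place of $X$, delivers $\Bn{X \oplus Y}{n+m}$. The main obstacle I anticipate is the bookkeeping around regularity of the definable jumps, since $\leq_\T$ between definable classes requires their initial segments to be coded as finite sets. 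Tracking which bounding hypothesis is available at each step of the chain---and verifying that the current hypothesis suffices to make the next jump regular---is where the proof must be presented with care; the hypotheses of (5) and (6) are calibrated precisely for this, as witnessed by (4).
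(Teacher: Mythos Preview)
Your proposal is correct and matches what the paper intends: the paper does not give a detailed proof at all, merely prefacing the theorem with ``By formalizing the universality of the Turing jump, one can easily see the following,'' so your outline---reducing (1)--(2) to $\Delta^X_1$-substitution, (3)--(4) to a formalized relativized Post theorem via the universal $\Pi^0_1$ formula $\pi$, and chaining these for (5)--(6)---is exactly the standard argument the paper is gesturing at. Your flagging of the regularity bookkeeping for the iterated jumps is appropriate; note also that the paper's stated definition of ``low$_m$ relative to $A$'' appears to contain a typo (it is trivially satisfied as written), and your reading $(X\oplus Y)^{(m)}\le_{\T} X^{(m)}$ is the intended one.
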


The following version of the low basis theorem is also essential throughout this paper.
\begin{thm}[{Low basis theorem \cite[Theorem 3.8]{HaPu}}]\label{thm:lowbasis}
There exists a $\mathsf{bool}(\Sigma^{X}_{1})$ formula $\theta(x,e_{1},e_{2},X)$ such that $\IN[1]$ proves the following. (Here $\mathsf{bool}(\Gamma)$ means the class of formulas obtained by boolean combinations and bounded quantifications of $\Gamma$-formulas.)
For any set $A$ and infinite $0 \hyphen 1$ tree $T \leq_{\T} A$ with the recursive indices $e_{1},e_{2}$, the definable set $W=\{x\mid \theta(x,e_{1},e_{2},A)\}$ is a path through $T$, and $\INX{1}{A \oplus W}$ still holds. Moreover, $\Bn{A}{2}$ implies that $W$ is low relative to $A$.
\end{thm}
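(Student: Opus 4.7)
The plan is to formalize the standard Jockusch--Soare construction of a low path through an infinite $0$-$1$ tree within $\IN[1]$. Given $T \leq_\T A$ infinite, I would build a descending sequence $T = T_0 \supseteq T_1 \supseteq \cdots$ of infinite subtrees. At stage $e$, let
\[
U_e = \{\sigma \in T_e : \forall \tau \preceq \sigma,\, \Phi_e^\tau(e)\diverge\}.
\]
If $U_e$ is infinite, set $d_e = 1$ and $T_{e+1} = U_e$; otherwise there is a least $N_e$ such that every $\sigma \in T_e$ of length $N_e$ exhibits a prefix $\tau$ with $\Phi_e^\tau(e)\defined$, and we set $d_e = 0$ with $T_{e+1}$ the refinement of $T_e$ along one such convergence witness. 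The desired $W$ is then the leftmost path through the ``limit'' of the $T_e$.

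To extract the formula $\theta$, note that each $T_e$ is determined by the finite parameter $\la (d_0, N_0), \dots, (d_{e-1}, N_{e-1})\ra$, and the correctness of such a parameter sequence bounded to length $x$ is a $\mathsf{bool}(\Sigma^X_1)$ condition: the assertion $d_i = 0$ is $\Sigma^X_1$ (certified by the witness $N_i$), while the assertion $d_i = 1$ restricted to depth $x$ is $\Pi^X_1$ (no short prefix yields a convergence witness along the remaining branches). The formula $\theta(x, e_1, e_2, X)$ then asserts the bounded existence of a certified parameter sequence of length $x+1$ whose induced $T_{x+1}$ has its leftmost extendible node at depth $x+1$ with $x$-th bit equal to $1$; the bounded existential and the boolean certifying conditions keep $\theta$ within $\mathsf{bool}(\Sigma^X_1)$.

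That each $T_e$ remains infinite---and hence $W$ is a genuine path through $T$---is proved in $\IN[1]$ by external induction on $e$, since the dichotomy ``$U_e$ infinite vs.\ finite'' is first-order in $A$. The preservation of $\INX{1}{A \oplus W}$ then follows from \thmref{thm:rel-induction}(1) together with the regularity of $W$: a $\mathsf{bool}(\Sigma^X_1)$-definable set is regular whenever $\Bn{X}{1}$ holds, which is ensured by $\INX{1}{A}$.

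The main obstacle is the lowness conclusion under $\Bn{A}{2}$. In general the witnesses $N_e$ are not uniformly bounded in $A$, so the decision sequence $\la d_0, \dots, d_e\ra$ is only $\Sigma^0_2(A)$-computable. However, $\Bn{A}{2}$ provides uniform bounds for bounded families of $\Sigma^0_1(A)$-statements, and this is precisely what is needed to compute the decision sequence $\la d_0, \dots, d_e\ra$ in $A'$. Since $\Phi_e^W(e)\defined$ is equivalent to $d_e = 0$, this yields $W' \leq_\T A' \leq_\T (A \oplus W)'$, which is the defining condition for lowness relative to $A$. The delicate point is to carefully propagate $\Bn{A}{2}$ through the inductive description of the $T_e$ so that the uniform bounds behave coherently across all stages up to any given level.
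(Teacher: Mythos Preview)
The paper does not supply its own proof here; the statement is quoted from H\'ajek--Pudl\'ak \cite{HaPu} and used as a black box, so there is no in-paper argument to compare against.

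Your outline follows the classical Jockusch--Soare strategy, but the formalization has a genuine gap. You write that infiniteness of each $T_e$ ``is proved in $\IN[1]$ by external induction on $e$''. This cannot work: your formula $\theta(x,\cdot)$ invokes the parameter sequence $(d_0,N_0),\dots,(d_x,N_x)$, so for nonstandard $x$ you must reason about $T_e$ for nonstandard $e$, where metatheoretic induction says nothing. Internally, ``$T_e$ is infinite'' is $\Pi^{A}_{2}$, which is beyond what $\IN[1]$ directly handles. The same issue resurfaces in $\theta$ itself: certifying $d_i=1$ only ``restricted to depth $x$'' merely says that $U_i$ has nodes up to level $x$. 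If $U_i$ is actually finite but of height $\ge x$, the depth-$x$ certificate will set $d_i=1$ (so $T_{i+1}=U_i$), while for some larger $x'$ it will set $d_i=0$ (so $T_{i+1}=T_i$); the leftmost extendible nodes of these two versions need not agree below level $x$, so the bits of $W$ you extract are not obviously coherent across different $x$. Separately, your appeal to \thmref{thm:rel-induction}(1) for $\INX{1}{A\oplus W}$ is insufficient, since $A\oplus W\not\le_{\T} A$ in general; that $\INX{1}{A\oplus W}$ nevertheless holds is a special feature of the particular path produced by the construction, not a generic consequence of $W$ being $\mathsf{bool}(\Sigma^{A}_{1})$-definable and regular.
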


We next define the notion of primitive recursion with a possibly external object $Z$.
\begin{thm}[{Primitive recursion \cite[Theorem II. 3]{sosoa}}]\label{thm:primitiverec}
The following is provable within $\RCAo$. Let $Z$ be a definable set which satisfies $\Bn{Z}{1}$ and consider $\mathcal F_Z = \{ f : \mathbb N^{<\mathbb N} \to \mathbb N \mid f \leq_{\T} Z \}$.
Then $\INX{1}{Z}$ implies the following.
  \begin{enumerate}
    \item For $f(y_0, \dots, y_{m-1}, \vec x ) , g_0( \vec x ), \dots, g_{m-1}( \vec x ) \in \mathcal{F} _Z$, the function $h$ defined below exists as an element of $\mathcal{F}_Z$.
    \[
    h(\vec x ) = f ( g_0( \vec x), \dots, g_{m-1}(\vec x), \vec x ).
    \]
    \item For $f(\vec x, y) \in \mathcal{F}_Z$ such that $\forall \vec x \exists y f(\vec x, y) = 0$, the function $h$ defined below exists as an element of $\mathcal{F}_Z$.
    \[
    h(\vec x) = \text{the least } y \text{ such that } f(\vec x, y )=0.
    \]
    \item For $f(z, \vec x, y), g(\vec x) \in \mathcal{F}_Z$, the function $h$ defined below exists as an element of $\mathcal{F}_Z$.
    \[
        h(0, \vec x) = g(\vec x),
        h(y+1, \vec x) = f(h(y, \vec x), \vec x, y).
    \]
  \end{enumerate}
\end{thm}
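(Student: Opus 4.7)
The plan is to reduce each clause to the standard computability-theoretic construction, translated into our formal setting so that $Z$ enters only through its (coded) initial segments (available by $\Bn{Z}{1}$) and the induction needed for totality is $\INX{1}{Z}$. Since $f, g_0, \dots, g_{m-1}$ are elements of $\mathcal F_Z$ and hence actual functions in the model with $Z$-recursive indices, the constructions of clauses (1) and (2) are routine once the index manipulation is spelled out. For composition, the graph of $h$ is $\Delta^0_0$ with $f, g_0, \dots, g_{m-1}$ as set parameters, so $h$ exists by the $\Delta^0_1$-comprehension of $\RCAo$, and a $Z$-recursive index is obtained by Kleene-style composition of indices through the fixed universal formula $\pi$. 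For minimization, the graph is given by the bounded formula $f(\vec x, y) = 0 \wedge \forall z < y\, f(\vec x, z) \neq 0$, which is $\Delta^0_0$ with parameter $f$, and totality is granted by hypothesis; a $Z$-recursive index is the usual bounded-search machine.

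The substantive step is clause (3). Define the auxiliary predicate $\psi(y, \vec x)$ asserting the existence of a finite sequence $\sigma = \la \sigma_0, \dots, \sigma_y \ra$ of length $y+1$ with $\sigma_0 = g(\vec x)$ and $\sigma_{i+1} = f(\sigma_i, \vec x, i)$ for all $i < y$. This is $\Sigma^0_1$ with the function-set parameters $f, g$; the base case is immediate and the induction step on $y$ uses totality of $f$ at the single triple $(\sigma_y, \vec x, y)$. To package $h$ as an element of $\mathcal F_Z$, however, we need a single $Z$-recursive index $e_h$ uniformly iterating the computations of $f$ and $g$; showing that this $e_h$ converges at every $(y, \vec x)$ is a $\Sigma^Z_1$ statement in $y$, and is established by $\INX{1}{Z}$. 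Once totality is in hand, the graph of $h$ is $\Delta^0_1$ (existence of a witness sequence together with the uniqueness of its last value), so $h$ exists as a set by $\RCAo$.

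The main obstacle is concentrated in clause (3): carrying out the induction on the recursion depth $y$ relative to the definable set $Z$ rather than to an actual set parameter in the model. Regularity of $Z$ ensures that the oracle calls made by $e_h$ refer only to bounded initial segments, which always exist; $\INX{1}{Z}$ is precisely what is needed to conclude that for every depth $y$ the iterated computation terminates. With these in place, all three clauses follow uniformly, and the $Z$-recursive index of $h$ is produced explicitly from those of $f$ and $g$ via the recursion-theorem formalism built around $\pi$.
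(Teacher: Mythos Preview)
The paper does not include its own proof of this statement; it is simply cited from Simpson's textbook (the reference in the theorem header). Your proposal is the standard argument, correctly adapted to the setting where $Z$ is a definable regular set rather than an element of the second-order part, and it is sound in outline.

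One minor inaccuracy: you assert that $f, g_0, \dots, g_{m-1}$ are ``actual functions in the model'' and then invoke $\Delta^0_1$-comprehension with them as set parameters to obtain $h$ as a set. If $Z$ is merely definable, elements of $\mathcal{F}_Z$ need not lie in $\mathcal S$ either, so that comprehension step is not available as written; but the conclusion only requires that $h$ have a $Z$-recursive index, not that it exist as a set. Your later paragraphs already take the correct route---working directly with indices built from the fixed universal formula $\pi_0$ and establishing totality of the iterated computation by $\INX{1}{Z}$---so the slip is purely expository. The treatment of clause~(3) via a course-of-values witness sequence and $\Sigma^Z_1$-induction on the recursion depth is exactly what is intended.
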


\begin{rem}
  This theorem guarantees that we can realize any $Z$-primitive recursive procedure within formal systems when $\INX{1}{Z}$ holds.
  Indeed, this fact will be an important observation in the proofs in Section \ref{constructions}.
\end{rem}

The following version of the limit lemma is also the basic tool of our constructions.
\begin{thm}\label{thm:limit-lemma}
The following is provable within $\RCAo$. Let $Z$ be a set such that $\Bn{Z}{2}$ holds.
For a definable set $W$, the following are equivalent.
\begin{enumerate}
 \item $W\le_{\T} Z'$.
 \item There exists a function $f:\N^{2}\to 2$ such that $f\le_{\T}Z$ and for any $n\in\N$, $n\in W\leftrightarrow \lim_{s\to \infty}f(n,s)=1$ and $n\notin W\leftrightarrow \lim_{s\to \infty}f(n,s)=0$.
\end{enumerate}
A (definable) set $W$ is said to be $\Delta^{Z}_{2}$-definable, or simply $\Delta^{Z}_{2}$, if one of the above holds.
\end{thm}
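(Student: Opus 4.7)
I would split the equivalence into the two directions, the easier being (2)$\Rightarrow$(1).

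For (2)$\Rightarrow$(1), given $f\le_{\T}Z$ with the stated limit property, one has
\[
 n\in W \iff \exists s_0\, \forall s\ge s_0\ f(n,s)=1,
\]
and dually for $n\notin W$, so both $W$ and its complement are $\Sigma^{Z}_{2}$-definable. By universality of the fixed $\Pi^{0}_{1}$ formula $\pi$, each inner $\Pi^{Z}_{1}$ predicate ``$\forall s\ge s_0\ f(n,s)=i$'' is equivalent to a single membership $\la e^{*}_{i},\la n,s_0\ra\ra\in\TJ(Z)$ for a fixed index $e^{*}_{i}$ computed from a $Z$-recursive index of $f$. Wrapping the outer $\exists s_0$ search into a partial recursive procedure with oracle $\TJ(Z)$ produces indices $e_1,e_2$ in the syntactic form of \defref{dfn:reduce} witnessing $W\le_{\T}\TJ(Z)$. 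The regularity hypothesis required by \defref{dfn:reduce}, namely $\Bn{\TJ(Z)}{1}$, is exactly $\Bn{Z}{2}$ by \thmref{thm:rel-induction}(4).

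For (1)$\Rightarrow$(2), given recursive indices $\la e_1,e_2\ra$ for $W\le_{\T}\TJ(Z)$, I would approximate $\TJ(Z)$ from above by the $Z$-recursive sequence
\[
 A_s := \bigl\{\la e,x\ra : \forall k\le s\ \pi_0(k,e,x,Z\restrict k)\bigr\},
\]
and define $f:\N^{2}\to 2$ by setting $f(n,s):=1$ if some $m\le s$ satisfies $\neg\pi_0(m,e_1,n,A_s\restrict m)$ and $f(n,s):=0$ otherwise, with a fixed tie-breaking rule in case an $e_2$-witness appears first. By \thmref{thm:primitiverec} applied relative to $Z$---whose premise $\INX{1}{Z}$ is supplied by $\Bn{Z}{2}$---this $f$ exists as a $Z$-recursive function.

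The main obstacle will be verifying $\lim_s f(n,s)=\chi_W(n)$ under only $\Bn{Z}{2}$. Fix $n$ and let $m_{*}$ be the least genuine witness under whichever of $e_1,e_2$ truly triggers; I must then show $A_s\restrict m_{*}=\TJ(Z)\restrict m_{*}$ for all sufficiently large $s$, since at such stages the construction forces $f(n,s)=\chi_W(n)$. For each $\la e,x\ra<m_{*}$ with $\la e,x\ra\notin\TJ(Z)$ there exists a least refutation stage $s_{e,x}$ at which some $\pi_0(k,e,x,Z\restrict k)$ fails, and forming $s_0:=\max_{\la e,x\ra<m_{*}} s_{e,x}$ is precisely a $\Sigma^{Z}_{1}$-collection instance on the bounded domain $\la e,x\ra<m_{*}$, i.e.\ an application of $\Bn{Z}{2}$. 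Combined with an analogous argument eliminating persistent false witnesses below $m_{*}$ arising from still-unstable portions of $A_s$, this yields the required convergence.
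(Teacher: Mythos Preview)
The paper does not include a proof of this theorem; it is stated without argument as a basic tool (the formalized Shoenfield limit lemma), with the subsequent remark noting that the equivalence persists under weaker hypotheses if one relaxes the notion of reduction. Your approach is the standard one and is essentially correct, so there is nothing to compare against.

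A few points of precision are worth tightening. First, your definition of $f$ as written does not converge in the case $n\notin W$: the clause ``$f(n,s)=1$ iff some $m\le s$ satisfies $\neg\pi_0(m,e_1,n,A_s\restrict m)$'' allows spurious $e_1$-witnesses at arbitrarily large, not-yet-stabilized positions of $A_s$, so $f(n,\cdot)$ could take the value $1$ infinitely often. What your tie-breaking remark gestures at, and what actually works, is to search for the \emph{least} $m\le s$ carrying either an $e_1$- or an $e_2$-witness relative to $A_s\restrict m$, outputting $1$ or $0$ accordingly; once $A_s\restrict m_*$ has stabilized, the genuine witness at $m_*$ is found first and the output is correct thereafter. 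Second, the bounding you invoke to produce $s_0$ is only a $\Sigma^{Z}_1$-collection instance, already available from $\INX{1}{Z}\subseteq\RCAo$; the genuine role of $\Bn{Z}{2}$ in this theorem is to guarantee the regularity of $Z'$ so that $W\le_{\T}Z'$ and the restrictions $Z'\restrict m$ make sense in the sense of \defref{dfn:reduce}. Third, your closing sentence about ``eliminating persistent false witnesses below $m_*$'' is superfluous: once $A_s\restrict m_*=Z'\restrict m_*$ there are no false witnesses below $m_*$, and under the least-$m$ rule false witnesses above $m_*$ are irrelevant.
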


\begin{rem}
Assuming $\Bn{Z}{n}$, we may consider the limit lemma for a (definable) set $W\le_{\T}Z^{(n-1)}$, and thus $\Delta^{Z}_{n}$-definable sets can be defined accordingly.
\end{rem}
\begin{rem}
The equivalence in \thmref{thm:limit-lemma} even holds just with $\In{Z}{1}$ if we consider a weaker notion of the Turing reduction (without assuming the regularity of $W$). Thus, we may consider the class of $\Delta^{0}_{2}$-definable sets even within $\RCAo$.
\end{rem}

\subsection{Ramsey's theorem}
We now consider Ramsey's theorem for pairs and its decompositions by means of cohesiveness and $\Dzt$ $k$-partition.
In the light of the limit lemma, we define a $\Dzt$ $k$-partition $\sqcup_{i<k}A_i = \mathbb N$ by a computable function $f:\N^{2}\to k$ such that $\lim_{s\to \infty}f(n,s)$ exists for all $n\in\N$ with $n\in A_{i}\leftrightarrow \lim_{s\to\infty}f(n,s)=i$.
\begin{dfn}
  \begin{enumerate}
    \item $\mathrm{RT}^n_k$ is the following statement. {\it For any $k$-coloring on $n$-tuples of natural numbers, there exists an infinite homogeneous set for at least one color.}
    \item $\mathrm{RT}^n$ is the statement $\forall k \mathrm{RT}^n_k$.
    \item $\COH$ is the following statement. {\it For any countable sequence of sets $\{ R_i \}_{i \in \mathbb N}$, there exists an infinite set $C$ such that $C \subseteq^{\ast} R_i$ or $C \subseteq^{\ast} R_i^c$ for all $i$.} Here, $X \subseteq^{\ast} Y$ means $X \setminus Y$ is finite.
    \item $\DAN[k]$ is the following statement. {\it For any $\Dzt$ $k$-partition $\sqcup_{i<k}A_i = \mathbb N$, there exists an infinite set $B$ such that $B \subseteq A_i$ for some $i$.}
    \item $\DAN[\empty]$ is the statement $\forall k \DAN[k]$.
  \end{enumerate}
\end{dfn}
  \begin{rem}
    It is clear that $\RTtt$ (resp.~$\DAN[2]$) implies $\RTt$ (resp.~$\DAN$) in standard mathematics by induction on the number of colors.
However, we need to distinguish these two concepts within $\RCAo$ because of the lack of induction.
In fact, the equivalence is even not provable from $\In{1}{0}$, while most of our construction should be done within $\IN[2]$ or $\BIII$.
  \end{rem}
\begin{prop}[{\cite[Lemmas 7.11 and 7.13]{CJS}}]\label{prop:decomposition}
  $\RCAo$ proves the following.
  \begin{enumerate}
    \item $\RTt[k] \leftrightarrow \COH \plus \DAN[k]$,
    \item $\RTt \leftrightarrow \COH \plus \DAN[\empty]$.
  \end{enumerate}
\end{prop}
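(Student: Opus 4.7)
The plan is to adapt the standard Cholak--Jockusch--Slaman decomposition to $\RCAo$. Since $\DAN[\empty]$ is $\forall k\, \DAN[k]$ and $\RTt$ is $\forall k\, \RTt[k]$, part (2) follows from part (1) by quantifying over $k$, so I focus on part (1).

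For the direction $\COH + \DAN[k] \to \RTt[k]$: given a $k$-coloring $c : [\N]^{2} \to k$, enumerate the sets $R_{\la x,i \ra} := \{y > x : c(x,y) = i\}$ for $x \in \N$ and $i < k$, and apply $\COH$ to obtain an infinite cohesive set $C$. By cohesion, each $x$ has a unique $g(x) < k$ with $C \subseteq^{\ast} R_{\la x, g(x) \ra}$, and the limit lemma (\thmref{thm:limit-lemma}) shows that $g$ is $\Dzt$ relative to $C$. Hence $\{g^{-1}(i)\}_{i<k}$ is a $\Dzt(C)$ $k$-partition, so $\DAN[k]$ relativized to $C$ yields an infinite $B$ on which $g$ is constantly $i_0$. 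A primitive-recursive thinning of $B \cap C$ relative to $B \oplus C$, available by \thmref{thm:primitiverec}, produces an infinite $H \subseteq B \cap C$ homogeneous for $c$ of color $i_0$, by selecting successive elements realizing color $i_0$ with all earlier ones.

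For the converse, $\RTt[k] \to \DAN[k]$ is immediate: a $\Dzt$ $k$-partition is presented by some $f : \N^{2} \to k$ with $n \in A_i \iff \lim_s f(n,s) = i$, and for the derived coloring $c(x,y) := f(x,y)$ (with $x<y$) any $\RTt[k]$-homogeneous set $H$ of color $i$ satisfies $H \subseteq A_i$ by cofinality at each $x \in H$. For $\RTt[k] \to \COH$, given $\vec R = (R_n)_n$, use a CJS-style coloring (e.g., $c(\{x,y\}) = 0$ iff $R_n(x) = R_n(y)$ for all $n \leq x$, else $1$), and apply $\RTt[2]$, which follows from $\RTt[k]$ by padding colors. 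An infinite homogeneous set in color $0$ is immediately cohesive for $\vec R$ (for each $n$, the value of $R_n$ is constant on the tail $H \cap [n, \infty)$), while the color-$1$ case is excluded or rerouted into cohesion via CJS's secondary extraction.

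The main obstacle is the $\RTt[k] \to \COH$ direction: $\COH$ speaks about an entire infinite sequence of sets, whereas $\RTt[k]$ is fixed to a finite palette, so all the information in $\vec R$ must be compressed into the coloring in a single application. The formalization within $\RCAo$ must be checked to avoid implicit use of induction beyond $\IN[1]$, in particular in the handling of the color-$1$ case. The $\COH + \DAN[k] \to \RTt[k]$ direction is combinatorially routine, with the only subtle step being the final thinning, handled through the primitive recursion theorem (\thmref{thm:primitiverec}) relative to the auxiliary cohesive and $\DAN[k]$-homogeneous sets.
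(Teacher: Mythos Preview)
The paper does not prove this proposition; it only cites \cite{CJS}. Your outline follows the intended route, but two steps are genuinely incomplete. In $\COH + \DAN[k] \to \RTt[k]$, applying $\DAN[k]$ to the partition of $\N$ gives an infinite $B$ with $g\restrict B \equiv i_0$, but $B \cap C$ need not be infinite; you must pull the partition back along the enumeration of $C$ so that $B \subseteq C$. More seriously, the thinning recursion is not automatically total: at stage $n{+}1$ you need $y \in B$ with $c(h_j,y)=i_0$ for all $j\le n$, and while each $h_j$ has only finitely many bad $y$'s, bounding their union over $j \le n$ is an instance of $\mathrm{B}\Pi^0_1 = \BII$, which $\RCAo$ alone does not provide. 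The repair is to invoke the known implication $\DAN[2] \to \BII$, but this must be made explicit before \thmref{thm:primitiverec} can be used to conclude the recursion is total.

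In $\RTt[k] \to \COH$, the color-$1$ case of your coloring is not a detail that can be waved away. An infinite color-$1$ homogeneous set corresponds to an infinite $\sqsubseteq$-antichain among the strings $\sigma_x = \la R_n(x)\ra_{n \le x}$, and such antichains certainly exist; attempting to ``reroute'' by iterated pigeonhole on initial segments just reproduces the cohesive-set construction you are trying to derive. The standard argument uses a different coloring: compare $\sigma_x$ with $\sigma_y \restrict (x{+}1)$ lexicographically and color by $\le_{\mathrm{lex}}$ versus $>_{\mathrm{lex}}$. An infinite homogeneous set then yields a lex-monotone sequence, and since for each fixed $n$ a monotone sequence in the finite linear order $2^n$ must stabilize (provably with $\IN[1]$), the homogeneous set is cohesive in either color.
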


We also examine another principle, the {\it Erd\H{o}s-Moser principle}.
The Erd\H{o}s-Moser principle ($\EM$) has received attention as a part of the decomposition of Ramsey's theorem for pairs.
Towsner and Yokoyama \cite{EM} considered multi-colored variants of $\EM$ and showed one of them, $\EMinf$, is $\Pi^1_1$-conservative over $\IN[2]$.
We prove $\ll^{2}$-basis theorem for $\EMinf$ within $\IN[2]$ using similar technique to $\COH$ and $\DAN[2]$.
This provides a new simpler proof of the above conservation result.
For some recent observations concerning the Erd\H{o}s-Moser principle, see also \cite{QuPaYo-wf} and \cite{LuMi}.
We consider the Erd\H{o}s-Moser principle concerning fallowness defined below.

\begin{dfn}
  A $k$-coloring $c : [\mathbb N]^2 \to k$ is {\it fallow on} $A \subseteq \mathbb N$ when $c(x,z) \in \{ c(x,y), c(y,z)\}$ for any $x,y,z \in A$ such that $x<y<z$.
\end{dfn}

\begin{dfn}
  $\EMinf$ is the following statement: for any $k$ and $k$-coloring $c$, there exists an infinite set $A \subseteq \mathbb N$ on which $c$ is fallow.
  The principle $\sEMinf$ is the following statement: For any $k$ and stable $k$-coloring $c$, there exists an infinite set $A \subseteq \mathbb N$ on which $c$ is fallow.
  Here a $k$-coloring $c$ is {\it stable} when $\lim_y c(x,y)$ exists for each $x$.
\end{dfn}

\subsection{The $\ll^n$-relation and low-like basis theorems}\label{llrelations}

In this subsection, we introduce the $\ll^n$-relation and effectively coded $\omega$-models of $\WKLo$.
The $\ll^n$-relation was introduced in \cite{iso} to combine recursion theoretic discussions with conservation results.

\begin{dfn}\label{dfn:ll}
  $X \ll^n_A Y$ is defined as {\it any infinite $\Delta^{X \oplus A}_{n}$-definable $0 \hyphen 1$ tree has a $\Delta^{Y \oplus A}_{n}$-definable path}.
\end{dfn}
We omit the subscript $A$ when $A = \emptyset$.
Note that $X \ll^1_{\emptyset} Y$ is equivalent to the statement that $X$ has a PA-degree relative to $Y$.
Most of the properties for $X \ll^n_A Y$ appearing in this subsection are reformulation of properties of PA-degrees.

The following definition can be made within $\RCAo \plus \BN$.
\begin{dfn}\label{dfn:basis}
  Let $P$ be a $\Pi^{1}_{2}$-statement which is of the form $\forall X\exists Y\theta(X,Y)$ where $\theta$ is a $\Sigma^{1}_{0}$ formula.
 The $\ll^n_A$-basis theorem for $P$ is the following statement: for sets $X$, $Z$ and $A$ satisfying $X \ll^n_A Z$, there exists a $\Delta^{Z \oplus A}_{n}$-definable set $\widetilde{Y}$ such that $\theta(X,\widetilde{Y})$ and $(X \oplus \widetilde{Y}) \ll^n_A Z$.
 (Here, $\tilde Y$ is said to be a $P$-solution to $X$.)
\end{dfn}

To construct a solution to the $\ll^{2}$-basis theorem, we use coded $\omega$-models of $\WKLo$ with additional effectivity.
Indeed, the notion of coded $\omega$-models has a strong and important connection with the relation $\ll^n$.
The following theorem is a careful reformulation of the well-known fact that $\WKLo$ proves the existence of a countable coded $\omega$-model of $\WKLo$.
\begin{thm}\label{thm:coded}
Let $(M,\cS)$ be a model of $\RCAo$ and $A \in \cS$. Then there exists a $0 \hyphen 1$ tree $T\in \cS$ such that $T\le_{\T} A$ and satisfies the following condition:
\begin{itemize}
 \item[] if $\mathcal W$ is a definable path of $T$ such that $\In{A\oplus \mathcal W}{1}$ holds, then it is a triple of the form
$\mathcal W = (\la W_i \ra_{i\in M}, f_{\mathcal W}, g_{\mathcal W})$ such that
  \begin{enumerate}
    \item[1.] $A = W_0$,
    \item[2.] $\forall i,j ( W_{f_{\mathcal W}(i,j)} = W_i \oplus W_j)$,
    \item[3.] $\forall e,i ( \forall n \exists \sigma \pi_{0}( n, e, \sigma, W_i\restrict n) \to \forall n \pi_{0}( n, e, W_{g_{\mathcal W}(e,i)} \restrict n, W_i\restrict n))$.\\
    (Recall that $\forall n\pi_{0}( n, e, \sigma, W_i\restrict n)$ is a universal $\Pi^0_1$ formula with displayed variables.)
  \end{enumerate}
\end{itemize}
We call such $\mathcal W$ an effectively coded $\omega$-model of $\WKLo$ containing $A$.
\end{thm}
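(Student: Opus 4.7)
The plan is to construct $T$ explicitly as the $A$-computable tree of finite approximations to an effectively coded $\omega$-model extending $A$, following the classical $\WKLo$-based construction of countable coded $\omega$-models but arranging every constraint to be decidable in $A$.

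First, I would fix computable injections $f, g : \N \times \N \to \N$ with disjoint ranges both avoiding $0$ (for instance $f(i,j) = 3\la i,j \ra + 1$, $g(e,i) = 3\la e,i \ra + 2$) and a computable pairing $\la \cdot, \cdot \ra$. A binary string $\tau$ will be interpreted as coding partial sequences $\tau_i$ via $\tau_i(n) = \tau(\la i, n \ra)$ whenever $\la i, n \ra < |\tau|$, so the target path $\mathcal W$ is a single binary set coding $(\la W_i \ra, f, g)$. I then declare $\tau \in T$ iff the following hold: (a) every available bit of $\tau_0$ equals the corresponding bit of $A$; (b) for every $i, j, n$ for which $\tau$ specifies the relevant bits, the $n$-th bit of $\tau_{f(i,j)}$ agrees with the corresponding bit of $\tau_i \oplus \tau_j$; and (c) for every $e, i, m$ for which the relevant bits of $\tau_i$ and $\tau_{g(e,i)}$ are available, whenever some $\sigma$ of length $m$ satisfies $\pi_0(m', e, \sigma \restrict m', \tau_i \restrict m')$ for every $m' \le m$, then $\pi_0(m', e, \tau_{g(e,i)} \restrict m', \tau_i \restrict m')$ also holds for every $m' \le m$.

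Next, since $\pi_0$ is $\Delta^0_0$ and each of (a)--(c) involves only bounded quantifiers over the bits of $\tau$ (and of $A$), membership in $T$ is uniformly $\Delta^0_0(A)$, so $T \le_\T A$ and hence $T \in \cS$ by $\Delta^0_1$-comprehension. Each condition is inherited by initial segments of $\tau$, so $T$ is closed under prefixes and hence a tree. To verify that any definable path $\mathcal W$ of $T$ with $\In{A \oplus \mathcal W}{1}$ has the stated structure: Condition $1$ is immediate from (a). Condition $2$ follows bitwise from (b): for any $i, j, n$, choose an initial segment of $\mathcal W$ revealing all the relevant bits; (b) then yields $W_{f(i,j)}(n) = (W_i \oplus W_j)(n)$. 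Condition $3$ is the main point. Fix $e, i$ and assume $\forall n \exists \sigma \pi_0(n, e, \sigma, W_i \restrict n)$, which under the standard tree-of-$\Pi^0_1$-formula reading says the corresponding $\Pi^0_1$-tree has a node at every level. Supposing toward contradiction that $\pi_0(n_0, e, W_{g(e,i)} \restrict n_0, W_i \restrict n_0)$ fails for some $n_0$, I would pick $\tau \prec \mathcal W$ long enough that $\tau_i \restrict n_0$ and $\tau_{g(e,i)} \restrict n_0$ are determined and a witness $\sigma$ of length $n_0$ from the hypothesis is visible; then the premise of (c) at $(e, i, m = n_0)$ holds while its conclusion fails, contradicting $\tau \in T$. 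This yields $\forall n \pi_0(n, e, W_{g(e,i)} \restrict n, W_i \restrict n)$.

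The hard part will be calibrating condition (c) so that it simultaneously (i) is $\Delta^0_0$ in $\tau$ (keeping $T$ $A$-computable), (ii) is preserved under restriction (keeping $T$ a tree), and (iii) is strong enough that any path satisfies the full $\Pi^0_2 \to \Pi^0_1$ formulation of Condition $3$ under only $\In{A \oplus \mathcal W}{1}$. The level-by-level split indexed by $m$ above is precisely what lets the derivation of Condition $3$ proceed without higher induction, since each instance is a $\Sigma^0_0$ check against a $\Sigma^0_1$-witness furnished by the hypothesis. A secondary subtlety is aligning the $\sigma$-quantification in the premise of (c) with the tree-like reading of the universal $\Pi^0_1$ formula $\pi$, so that existence of a witness at level $m$ coincides with the $\Pi^0_1$-tree of $(e, W_i)$ having a node of length $m$; this is standard in the choice of Kleene's normal form and of the encoding of finite binary strings as natural numbers.
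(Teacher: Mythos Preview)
Your construction is correct and follows essentially the same route as the paper's own (outlined) proof: build an $A$-computable tree whose nodes encode finite approximations to $\la W_i\ra$, with clauses forcing $W_0=A$, closure under join, and that $W_{g(e,i)}$ lies on the $\Pi^0_1$-tree for $(e,W_i)$ whenever that tree survives to the relevant level. Your write-up is in fact more detailed than the paper's sketch—in particular, your explicit discussion of why clause (c) is $\Delta^0_0$, downward-closed, and sufficient for Condition~3 under only $\In{A\oplus\mathcal W}{1}$ spells out points the paper leaves implicit.
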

By combining this with the formalized low basis theorem (\thmref{thm:lowbasis}), we have the following.
\begin{thm}\label{thm:coded2}
If $(M,\cS)$ is a model of $\RCAo$ and $A \in \cS$,
then there exists a $\mathsf{bool}(\Sigma^{A}_{1})$-definable set $\mathcal W$ such that $\In{A\oplus \mathcal W}{1}$ and $\mathcal W = (\la W_i \ra_{i\in M}, f_{\mathcal W}, g_{\mathcal W})$ is an effectively coded $\omega$-model of $\WKLo$ containing $A$.
Moreover, $\Bn{A}{2}$ implies that such $\mathcal W$ can be taken as a low set relative to $A$.
\end{thm}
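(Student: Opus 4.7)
The plan is to directly compose Theorem \ref{thm:coded} with the formalized low basis theorem (Theorem \ref{thm:lowbasis}); essentially no new ideas are needed beyond checking that the outputs of the two results fit together.

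First, I would apply Theorem \ref{thm:coded} to the model $(M,\cS)$ and the given set $A\in\cS$ to obtain a $0\hyphen 1$ tree $T\in\cS$ with $T\le_{\T} A$ having the stated property: any definable path through $T$ whose join with $A$ satisfies $\In{\cdot}{1}$ is automatically of the form $(\la W_i\ra_{i\in M}, f_{\mathcal W}, g_{\mathcal W})$ encoding an effectively coded $\omega$-model of $\WKLo$ containing $A$. Fix a pair of $A$-recursive indices $\la e_1,e_2\ra$ for $T$; these exist since $T\le_{\T} A$ and are elements of $M$.

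Next, I would invoke Theorem \ref{thm:lowbasis} with the tree $T$, the parameter $A$, and the indices $e_1,e_2$. This yields a $\mathsf{bool}(\Sigma^{A}_{1})$-definable set
\[
\mathcal W \;=\; \{x\mid \theta(x,e_1,e_2,A)\}
\]
which is a path through $T$ and satisfies $\INX{1}{A\oplus\mathcal W}$. Combining this with the characterizing property of $T$ from the previous paragraph, $\mathcal W$ must decode as the triple $(\la W_i\ra_{i\in M}, f_{\mathcal W}, g_{\mathcal W})$ witnessing that $\mathcal W$ is an effectively coded $\omega$-model of $\WKLo$ containing $A$. This establishes the first assertion of the theorem.

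For the ``moreover'' part, I would simply quote the second half of Theorem \ref{thm:lowbasis}: under the additional assumption $\Bn{A}{2}$, the same definable path $\mathcal W$ produced by the low basis theorem is already low relative to $A$, so no extra work is required. The only step that requires any caution is the verification that the formula class $\mathsf{bool}(\Sigma^{A}_{1})$ delivered by Theorem \ref{thm:lowbasis} is preserved under the decoding of $\mathcal W$ into its three components $\la W_i\ra, f_{\mathcal W}, g_{\mathcal W}$; this is routine since the decoding is uniformly $\Delta^{0}_{1}$ in $\mathcal W\oplus A$, and this is the only potential technical obstacle, but it does not cause a genuine difficulty.
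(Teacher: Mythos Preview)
Your proposal is correct and follows exactly the approach indicated in the paper, which simply states that Theorem~\ref{thm:coded2} is obtained by combining Theorem~\ref{thm:coded} with the formalized low basis theorem (Theorem~\ref{thm:lowbasis}) and gives no further details. The only small point you might make explicit is that the tree $T$ from Theorem~\ref{thm:coded} is infinite (so that Theorem~\ref{thm:lowbasis} applies), but this is standard and the paper does not spell it out either.
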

Note that $(M,\la W_i \ra_{i\in M})$ is indeed a model of $\WKLo$ which consists only of $\mathsf{bool}(\Sigma^{A}_{1})$-definable sets, and they are also low relative to $A$ if $\Bn{A}{2}$ holds in $(M,\cS)$.

\begin{proof}[\it Proof of \thmref{thm:coded}.]
We outline the key ideas for the existence of effectively coded $\omega$-models, since the proof is standard.
The core idea is the same as that in \cite{iso} (Lemma 3.2).
We define a tree $T$ whose path codes an effectively coded $\omega$-model in $(M,\mathcal S)$.
A node $\sigma \in T$ encodes information about a segment of $W_i$.
Roughly speaking, $\sigma$ is defined as follows.
Here $s$ is a certain bound associated with the length of $\sigma$.

\begin{itemize}
  \item $\sigma (\la 0, k \ra)=1$ iff $k \in A\restrict s$,
  \item $\sigma (\la \la 0,i,j\ra, k \ra) =1$ iff $k \in W_i \oplus W_j$,
  \item If $\la 1,e,i \ra <s$ and $\exists \tau \in M^s \forall n<s \pi_{0}(n,e,\tau\restrict n,W_i\restrict n)$, then\\
  $\forall n<s \pi_{0} (n, e, \{x<s \mid \sigma( \la \la 1,e,i \ra,x \ra)=1\}, \{ x<s \mid \sigma(\la i,x \ra)=1\} )$.
\end{itemize}
 Note that $\{ x<s \mid \sigma(\la k,x \ra)=1\}$ works as if it were $W_k \restrict s$.
 If we have the additional assumption $A' \in \mathcal S$, we can apply low basis theorem (in $\mathcal W$) to pick a path low relative to $A$.
In this setting $\la 0, i, j \ra$ and $\la 1, e, i \ra$ work as $f_{\mathcal W}$ and $g_{\mathcal W}$ respectively.
\end{proof}

Within effectively coded $\omega$-models, we may obtain some operators we use in the proof in Section \ref{constructions}.
In other words, one may {\it realize an operator} as a function which is Turing reducible to $\mathcal W$ such that, for given inputs $i_0, \dots, i_{k-1}$, outputs an index of a set in $\mathcal W$ which is obtained from $W_{i_0}, \dots, W_{i_{k-1}}$.
The following are typical examples.
\begin{ex}
  $\RCAo$ proves the following.
  For a given effectively coded $\omega$-model $\mathcal W$, the following operators $h_{1},\dots, h_6$ are Turing reducible to $\mathcal W$.
  \begin{enumerate}
  \item For any $e$ and $i$, if the $e$th $\Pi^{W_i}_1$ class is non-empty (in other words, $\forall n \exists \sigma \pi_{0}( n, e, \sigma, W_i\restrict n)$ holds
  \footnote{We adopt this convention that we call a definable set defined by $\Pi^{X}_1$ formula $\Pi^{X}_1$ class.}
  ), $W_{h_1(e,i)}$ is a member of that class.
    \item For any $e$, let $\Phi_e$ be the $e$th $\Pi^{\mathcal W}_1$ sentence. If $h_2(e)= 0$ then $\Phi_e$ is true and if $h_2(e)=1$ then $\Phi_e$ is false.
    \item For given $i$ and $k$, $W_{h_{3}(i,k)}$ is the set by removing the least $k$ elements from $W_{i}$ (here, $W_{h_3(i,k)}=\emptyset$ if $|W_{i}|\le k$).
    \item $h_{4},h_{5},h_{6}$ to describe unions, intersections and complements for sets,
  \end{enumerate}
\end{ex}

For our main constructions, the following realization is the most important one.

\begin{prop}\label{prop:piselect}
Let $R_0(i, X)$ and $R_1(i,X)$ be $\Pi^{X}_{2}$ formulas.
Then the following are provable within $\RCAo$. Let $X$ be a set and $\mathcal W$ is an effectively coded $\omega$-model $\mathcal W = \la W_i \ra$ containg $X'$ (which exists as a set).
Then there exists a function $h\le_{\T}\mathcal W$ such that
  \[
  \forall j [ R_0(j,X) \lor R_1(j,X) \to ( h(j)=0 \to R_0(j,X) ) \land ( h(j)=1 \to R_1(j,X)) ].
  \]
\end{prop}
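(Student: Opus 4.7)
The plan is to realize $h$ as a path through an $X'$-computable binary tree $T$ whose nodes encode admissible partial values for $h$, and then to extract such a path uniformly from $\mathcal W$ by exploiting the fact that $\mathcal W$ is an effectively coded $\omega$-model of $\WKLo$ containing $X'$. Since each $R_{i}(j,X)$ is $\Pi^{X}_{2}$ and $X'$ is available as a set, I first rewrite $R_{i}(j,X)\leftrightarrow \forall n\,\psi_{i}(j,n,X')$ for some $\Delta^{0}_{0}$ formula $\psi_{i}$ via Kleene normal form; thus $\psi_{i}$ becomes $X'$-decidable.

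Next, I define $T\subseteq 2^{<\N}$ by declaring $\sigma\in T$ iff, for every $j<|\sigma|$, one of the following holds: (a) $\sigma(j)=0$ and $\psi_{0}(j,n,X')$ for all $n<|\sigma|$; (b) $\sigma(j)=1$ and $\psi_{1}(j,n,X')$ for all $n<|\sigma|$; (c) there exist $n,n'<|\sigma|$ with $\neg\psi_{0}(j,n,X')\land \neg\psi_{1}(j,n',X')$. Clearly $T\leq_{\T} X'$. To see that $T$ is infinite, for each $\ell$ I will construct an admissible string $\sigma_{\ell}$ of length $\ell$ by $X'$-primitive recursion on $j<\ell$: put $\sigma_{\ell}(j)=0$ if no $n<\ell$ refutes $R_{0}(j,X)$; otherwise $\sigma_{\ell}(j)=1$ if no $n<\ell$ refutes $R_{1}(j,X)$; otherwise $\sigma_{\ell}(j)=0$, in which case clause (c) is satisfied at $j$. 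This construction is available within $\RCAo$ by \thmref{thm:primitiverec}, using $\INX{1}{X'}$, which follows from $X'\in\mathcal W$.

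Since $[T]$ is then a non-empty $\Pi^{X'}_{1}$ class, clause~3 of \thmref{thm:coded} (equivalently, the operator $h_{1}$ from the example preceding this proposition) yields an index $i_{h}$ computable from $\mathcal W$ with $h:=W_{i_{h}}\in [T]$; hence $h\leq_{\T}\mathcal W$ uniformly in $\mathcal W$ and in the indices of $R_{0},R_{1}$. For the verification, assume $R_{0}(j,X)\lor R_{1}(j,X)$ and $h(j)=0$, and suppose for contradiction $\neg R_{0}(j,X)$, witnessed by some $n_{0}$. Then for any $\ell>\max(j,n_{0})$ the initial segment $h\restrict \ell\in T$ cannot satisfy (a) (because $n_{0}<\ell$) or (b) (because $h(j)=0$), so (c) must hold, producing some $n'$ with $\neg\psi_{1}(j,n',X')$ and hence $\neg R_{1}(j,X)$, contradicting the hypothesis. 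The case $h(j)=1$ is symmetric.

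The main subtlety, and what I regard as the only nontrivial point of the argument, is the design of clause (c): it is required so that $T$ remains infinite at coordinates $j$ where both $R_{0}(j,X)$ and $R_{1}(j,X)$ fail, while clauses (a) and (b) must simultaneously be strong enough to force $h$ to track the true disjunct wherever $R_{0}(j,X)\lor R_{1}(j,X)$ actually holds. Once the tree is set up correctly, the remainder is a routine application of the WKL-selection built into $\mathcal W$, carried out inside $\RCAo$ using only $\INX{1}{X'}$.
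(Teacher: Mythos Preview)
Your overall strategy---encode the admissible values of $h$ as a $\Pi^{X'}_{1}$ class and then extract a member using the $\wkl$ built into $\mathcal W$---is precisely the paper's approach, which phrases the same idea as totalizing an $X'$-partial computable $\{0,1\}$-valued function. Your verification paragraph is also fine. The gap is in the middle step: the set $T$ you define is \emph{not} closed under initial segments, so exhibiting some $\sigma_{\ell}\in T$ of each length $\ell$ does not establish that $[T]$ is non-empty (equivalently, that the genuine tree $\{\sigma:\forall m\le|\sigma|\,(\sigma\restrict m\in T)\}$ is infinite), and hence $\wkl$ cannot be applied as stated.

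Concretely, take $j$ with $\neg\psi_{0}(j,n_{0})$ and $\neg\psi_{1}(j,n_{1})$ for some $j\le n_{0}<n_{1}$, where $\psi_{1}(j,n)$ holds for all $n\le n_{0}$. For any $\ell>n_{1}$ your rule gives $\sigma_{\ell}(j)=0$ via the ``otherwise'' clause, but the restriction $\tau:=\sigma_{\ell}\restrict(n_{0}+1)$ still has $\tau(j)=0$, while at length $n_{0}+1$ clause (a) fails (because $n_{0}<n_{0}+1$) and clause (c) fails (no $n'<n_{0}+1$ refutes $\psi_{1}$); hence $\tau\notin T$. The remedy is to let the value at $j$ be governed by which of $\psi_{0},\psi_{1}$ is refuted \emph{first}: set $\sigma_{\ell}(j)=0$ if the least $n<\ell$ with $\neg\psi_{0}(j,n)\lor\neg\psi_{1}(j,n)$ (when it exists) satisfies $\neg\psi_{1}(j,n)$, and $\sigma_{\ell}(j)=1$ otherwise. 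With this ``race'' rule every initial segment of $\sigma_{\ell}$ lies in $T$, and the underlying partial function $j\mapsto\text{(first side to fail)}$ is exactly the one the paper totalizes via the PA-degree of $\mathcal W$ over $X'$.
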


\begin{proof}
  The essential idea is from the proof of Theorem 6.44 in \cite{slice}.
  For simplicity and our later usage, we consider the case where $R_0$ means that $W \cap W_j$ is infinite and $R_1$ means that $W \cap W_j^c$ is infinite for a given set $W$.
  For the purpose of this paper, we identify $W$ as $W_i$ for some fixed index $i$.

  Define a $X'$-partial computable function $\psi(i,j)$ which finds $n$ and $k$ such that $\forall m \geq n \lnot (W_i(m) = 1 \land W_j(m) = 1-k)$, and outputs $k$.
  From the well-known characterization of PA-degree, $X' \ll Z$ implies that we can define a total $Z$-computable function $\overline \psi$ which extends $\psi$.
  If $\overline \psi(i,j) = 0$ then (A)$W_i \cap W_j$ is finite or (B) $\psi(i,j)$ is undefined.
  In both cases, $W_i \cap W_j^c$ is infinite.
  If $\overline \psi(i,j) = 1$ then (C) $W_i \cap W_j$ is infinite or (B) $\psi(i,j)$ is undefined.
  In both cases, $W_i \cap W_j$ is infinite.
\end{proof}

To simplify our constructions, we use the following propositions.

\begin{prop}\label{prop:dense}
The following is provable within $\RCAo$.
Let $X,Y$ be sets such that $X \ll Y$. Then there exists an effectively coded $\omega$-model $\mathcal W$
  containing $X$ such that $X \ll \mathcal W \ll Y$.
\end{prop}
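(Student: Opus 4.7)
The plan is to apply \thmref{thm:coded} in a nested fashion, producing a single $X$-computable tree whose paths simultaneously code two effectively coded $\omega$-models $\mathcal W$ and $\mathcal V$ with $X \in \mathcal W$ and $\mathcal W$ (encoded as a single set) equal to $V_{0}$ inside $\mathcal V$, then select a $Y$-computable path using the hypothesis $X \ll Y$. The model $\mathcal W$ will be our witness; the role of the outer model $\mathcal V$ is only to provide paths through $\mathcal W$-computable trees.

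Concretely, I would define an $X$-computable tree $T$ whose nodes $\sigma$ of length $s$ encode the length-$s$ approximations of both structures: a candidate effectively coded $\omega$-model $\mathcal W = (\la W_{i} \ra, f_{\mathcal W}, g_{\mathcal W})$ with $W_{0} = X$, together with a candidate effectively coded $\omega$-model $\mathcal V = (\la V_{i} \ra, f_{\mathcal V}, g_{\mathcal V})$ with $V_{0} = |\mathcal W|$, where $|\mathcal W|$ denotes the canonical join-coding of $\mathcal W$ as a single set. The tree condition merges the three clauses of \thmref{thm:coded}: the $\mathcal W$-part of $\sigma$ must satisfy the length-$s$ approximation of those clauses relative to $X$, and the $\mathcal V$-part must satisfy the analogous approximation relative to the $\mathcal W$-bits read off $\sigma$ itself. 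Since the $\mathcal W$-conditions are computable from $X$ and $\sigma$, and the $\mathcal V$-conditions are computable from the $\mathcal W$-portion of $\sigma$, the whole tree satisfies $T \leq_{\T} X$. Infiniteness of $T$ is then proved by the same stage-by-stage extension argument as in the proof of \thmref{thm:coded}, applied to both layers simultaneously; no additional induction beyond what that proof already employs is needed.

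By the hypothesis $X \ll Y$, the infinite $X$-computable tree $T$ has a $\Delta^{Y}_{1}$-definable path, which decodes to $\mathcal W \leq_{\T} Y$ and $\mathcal V \leq_{\T} Y$. I then verify the two $\ll$-conclusions. For $X \ll \mathcal W$: any infinite $X$-computable $0 \hyphen 1$ tree $T'$ gives a non-empty $\Pi^{W_{0}}_{1}$ class of paths, so condition (3) of \thmref{thm:coded} for $\mathcal W$ produces a path $W_{g_{\mathcal W}(e,0)} \leq_{\T} \mathcal W$. For $\mathcal W \ll Y$: any infinite $\mathcal W$-computable $0 \hyphen 1$ tree $T''$ is computable from $|\mathcal W| = V_{0}$, so its paths form a non-empty $\Pi^{V_{0}}_{1}$ class, and condition (3) of \thmref{thm:coded} for $\mathcal V$ yields a path $V_{g_{\mathcal V}(e,0)} \leq_{\T} \mathcal V \leq_{\T} Y$.

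The main obstacle is designing the interleaved tree cleanly so that the conditions on $\mathcal V$, which refer to the $\mathcal W$-bits already placed inside $\sigma$, remain $X$-checkable at finite stages in a uniform way, and so that the iterated stage-by-stage extension argument behind the infiniteness of $T$ still goes through in $\RCAo$ without any external appeal to $\wkl$. Both points should be obtained by a direct adaptation of the construction of the tree in \thmref{thm:coded} to a two-layer product setting.
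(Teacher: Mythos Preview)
Your proposal is correct, and the underlying idea---nest two effectively coded $\omega$-models so that the inner one is the desired $\mathcal W$ and the outer one supplies paths through $\mathcal W$-computable trees---is exactly the paper's idea. The difference is purely in execution. The paper avoids your interleaved product tree entirely by working \emph{sequentially}: first use $X\ll Y$ and \thmref{thm:coded} to obtain an effectively coded $\omega$-model $\mathcal W_{1}\leq_{\T} Y$ of $\WKLo$ containing $X$; then, since $\mathcal W_{1}$ is already a model of $\WKLo$ with $X$ as a member, apply \thmref{thm:coded} \emph{inside} $\mathcal W_{1}$ to obtain a second effectively coded $\omega$-model $\mathcal W_{2}$ containing $X$ as one of the columns of $\mathcal W_{1}$. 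Condition~3 for $\mathcal W_{1}$ then immediately gives $\mathcal W_{2}\ll\mathcal W_{1}\leq_{\T} Y$, and condition~3 for $\mathcal W_{2}$ gives $X\ll\mathcal W_{2}$. This uses \thmref{thm:coded} as a black box twice and sidesteps the bookkeeping you flag as ``the main obstacle'' (aligning the $\mathcal V$-layer's references to $|\mathcal W|\restrict s$ with the $\mathcal W$-bits available in $\sigma$, and re-verifying infiniteness for the two-layer tree). Your route works, but the paper's is shorter and more modular.
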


\begin{prop}\label{prop:rephrase}
  $\ll^2$-basis theorem for $P$ is equivalent to the following statement. For sets $A$ and $C$ such that $A' \ll C$, there exists $B$ which is a $P$-solution to $A$ such that $(A \oplus B)' \ll C$.
\end{prop}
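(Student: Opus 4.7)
The plan is to establish both directions of the equivalence, using two basic observations: (i) by the limit lemma (\thmref{thm:limit-lemma}), $X \ll^{2} Y$ is equivalent to $X' \ll Y'$, because a set is $\Delta^{X}_{2}$-definable exactly when it is $\leq_{\T} X'$; and (ii) $S \ll Y$ implies $S \leq_{\T} Y$, as witnessed by the $S$-computable tree of initial segments of $S$, which has $S$ itself as its unique path, so any $Y$ which computes a path in every $S$-computable infinite $0 \hyphen 1$ tree must compute $S$.

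For the direction from the rephrased form to the $\ll^{2}$-basis theorem, suppose $X \ll^{2} Z$, i.e., $X' \ll Z'$. Setting $A = X$ and $C = Z'$, the hypothesis $A' \ll C$ holds and the rephrased statement yields a $P$-solution $B$ with $(X \oplus B)' \ll Z'$. By (ii), $B \leq_{\T} X \oplus B \leq_{\T} (X \oplus B)' \leq_{\T} Z'$, so $B$ is $\Delta^{Z}_{2}$-definable, and $(X \oplus B)' \ll Z'$ is exactly $(X \oplus B) \ll^{2} Z$; thus $\widetilde{Y} = B$ witnesses the $\ll^{2}$-basis theorem.

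For the reverse direction, assume the $\ll^{2}$-basis theorem and take $A, C$ with $A' \ll C$. The key is to produce a $Z$ satisfying $A \ll^{2} Z$ and $Z' \leq_{\T} C$ (or more generally $Z' \ll C$); applying the $\ll^{2}$-basis theorem to this $Z$ then yields $\widetilde{Y}$ with $(A \oplus \widetilde{Y})' \ll Z'$, and transitivity/upward closure of $\ll$ in its right argument gives $(A \oplus \widetilde{Y})' \ll C$. The natural way to find such a $Z$ is by Friedberg's jump inversion: since $C \geq_{\T} A' \geq_{\T} \emptyset'$, there exists $Z$ with $Z' \equiv_{\T} C$, so $A' \ll C \equiv_{\T} Z'$ gives $A \ll^{2} Z$. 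Setting $B = \widetilde{Y}$ yields the $P$-solution required by the rephrased form.

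The main obstacle is formalizing the jump-inversion step within the weak arithmetic setting of this paper. Friedberg's construction is finite-injury and goes through in $\RCAo$, but to minimize assumptions, one may instead combine it with \propref{prop:dense}: first find an effectively coded $\omega$-model $\mathcal{W}$ with $A' \ll \mathcal{W} \ll C$, then perform Friedberg inversion inside $\mathcal{W}$ (where $\mathcal{W} \geq_{\T} A' \geq_{\T} \emptyset'$) to obtain $Z$ with $Z' \equiv_{\T} \mathcal{W}$. The resulting chain $(A \oplus \widetilde{Y})' \ll Z' \equiv_{\T} \mathcal{W} \ll C$ then collapses to $(A \oplus \widetilde{Y})' \ll C$ by transitivity.
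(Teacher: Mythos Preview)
Your proof is correct and follows essentially the same approach as the paper: one direction is the immediate translation via $X \ll^{2} Y \Leftrightarrow X' \ll Y'$, and the other uses Friedberg jump inversion to produce a set whose jump is Turing-equivalent to $C$. You supply more detail than the paper does---in particular, you explicitly check that the solution $B$ is $\Delta^{Z}_{2}$ via the ``unique-path'' argument $S \ll Y \Rightarrow S \leq_{\T} Y$, which the paper leaves implicit under ``simple revision.''

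One small remark on your final paragraph: the detour through an effectively coded $\omega$-model $\mathcal{W}$ does not actually reduce the assumptions needed, since you still invoke Friedberg inversion (now on $\mathcal{W}$ rather than on $C$). The paper simply cites the jump inversion theorem directly; since Friedberg's finite-injury construction and its verification formalize in $\RCAo$ (and we are at least in $\RCAo + \BN[2]$ here, where jumps are well-behaved), no additional care is required. Your alternative route is not wrong, just superfluous.
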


As a lemma, we see the following lemma which gives the full characterization of the $\ll^{n}$-relations.

\begin{lem}\label{lem:ll}
The following is provable within $\RCAo$.
For any sets $X$ and $Y$, $\mathrm{B}\Sigma^{Y}_n$ implies that the following are equivalent.
  \begin{enumerate}
    \item $X \ll^n Y$,
    \item There exists a $\Delta^{Y}_{n}$ set $\mathcal W$ which codes an $\omega$-model of $\WKLo$ containing $X^{(n-1)}$,
    \item Any nonempty $\Pi_1^{X^{(n-1)}}$-class  has a $\Delta^{Y}_{n}$ element.
  \end{enumerate}
\end{lem}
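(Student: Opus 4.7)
The plan is to prove the chain $(1)\Rightarrow (3)\Rightarrow (2)\Rightarrow (1)$, relying on the jump-level translation of induction and bounding from \thmref{thm:rel-induction} together with the construction of effectively coded $\omega$-models in \thmref{thm:coded}. The role of $\mathrm{B}\Sigma^{Y}_{n}$ will be to guarantee that $Y^{(n-1)}$, and hence every $\Delta^{Y}_{n}$-definable set, behaves as a regular set satisfying $\mathrm{B}\Sigma_{1}$ relative to itself (via \thmref{thm:rel-induction}(4)), so that the formalized computability machinery of Section~2.1 applies cleanly to these definable objects.

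For $(1)\Rightarrow (3)$, I would observe that any nonempty $\Pi^{X^{(n-1)}}_{1}$-class is the set of infinite paths through an $X^{(n-1)}$-computable, hence $\Delta^{X}_{n}$-definable (by \thmref{thm:rel-induction}(3)), infinite $0\hyphen 1$ tree; applying the definition of $X\ll^{n}Y$ to this tree produces the desired $\Delta^{Y}_{n}$-definable element. Dually, for $(2)\Rightarrow (1)$, given $\mathcal{W}$ as in (2) and an infinite $\Delta^{X}_{n}$-definable $0\hyphen 1$ tree $T$, \thmref{thm:rel-induction}(3) yields $T\le_{\T}X^{(n-1)}$, so an index of $T$ lives in $\mathcal{W}$; since $\mathcal{W}$ codes a model of $\WKLo$, it then contains a path $P$ of $T$ with $P\le_{\T}\mathcal{W}$, and reducibility to the $\Delta^{Y}_{n}$-definable $\mathcal{W}$ makes $P$ itself $\Delta^{Y}_{n}$-definable.

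The central step is $(3)\Rightarrow (2)$, which I would establish by applying \thmref{thm:coded} relativized to $A:=X^{(n-1)}$ to obtain an $X^{(n-1)}$-computable infinite $0\hyphen 1$ tree $T$ whose sufficiently inductive paths code effectively coded $\omega$-models of $\WKLo$ containing $X^{(n-1)}$. The set of paths of $T$ forms a nonempty $\Pi^{X^{(n-1)}}_{1}$-class, so (3) supplies a $\Delta^{Y}_{n}$-definable path $\mathcal{W}$; since $\mathcal{W}\le_{\T}Y^{(n-1)}$ and $X^{(n-1)}$ is Turing-reducible to $\mathcal{W}$ (being coded by the first component), \thmref{thm:rel-induction}(2)(4) together with $\mathrm{B}\Sigma^{Y}_{n}$ yields $\mathrm{I}\Sigma^{X^{(n-1)}\oplus\mathcal{W}}_{1}$, and the conclusion of \thmref{thm:coded} upgrades $\mathcal{W}$ to a genuine coded $\omega$-model of $\WKLo$ as required.

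The main obstacle will be precisely this verification of the relativized $\mathrm{I}\Sigma_{1}$ hypothesis at the path $\mathcal{W}$; without it, (3) would give only an infinite path through $T$ and not a coded model. This is where $\mathrm{B}\Sigma^{Y}_{n}$ does the essential work, being exactly strong enough under the jump translation to ensure that every $\Delta^{Y}_{n}$-definable path inherits the induction needed for \thmref{thm:coded} to fire.
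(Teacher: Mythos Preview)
Your cycle $(1)\Rightarrow(3)\Rightarrow(2)\Rightarrow(1)$ is the natural strategy, and the paper itself does not spell out a proof here: it simply cites \cite{iso}, Lemma~4.8(b), for $(1)\Leftrightarrow(2)$ (noting that this already holds over $\RCAs$) and declares $(1)\Leftrightarrow(3)$ immediate from the definitions. In that sense your outline is considerably more explicit than what the paper provides.

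There is, however, a genuine gap in your $(3)\Rightarrow(2)$ step. You assert that \thmref{thm:rel-induction}(2)(4) together with $\mathrm{B}\Sigma^{Y}_{n}$ yields $\mathrm{I}\Sigma^{X^{(n-1)}\oplus\mathcal{W}}_{1}$, but those clauses of \thmref{thm:rel-induction} concern only \emph{bounding}: from $\mathrm{B}\Sigma^{Y}_{n}$ you obtain $\mathrm{B}\Sigma^{Y^{(n-1)}}_{1}$ by~(4) and then $\mathrm{B}\Sigma^{\mathcal{W}}_{1}$ by~(2), never $\mathrm{I}\Sigma^{\mathcal{W}}_{1}$. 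Since \thmref{thm:coded} as stated requires $\mathrm{I}\Sigma^{A\oplus\mathcal{W}}_{1}$ for a path $\mathcal{W}$ to be upgraded to an effectively coded $\omega$-model of $\WKLo$, you have not discharged its hypothesis. With only $\mathrm{B}\Sigma^{\mathcal{W}}_{1}$ available, the conclusion you can draw is that $(M,\langle W_{i}\rangle)$ satisfies $\WKLs$ rather than $\WKLo$; the fact that the result in \cite{iso} is formulated over $\RCAs$ indicates that exactly this distinction has to be handled carefully there. To close the gap you must either argue that $\mathrm{B}\Sigma_{1}$ relative to $\mathcal{W}$ already suffices for the verification demanded by item~(2), or invoke the construction of \cite{iso} directly rather than routing through \thmref{thm:coded}.
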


Items 1 and 2 are equivalent even over $\RCAs$; see \cite{iso}, Lemma 4.8 (b).
The equivalence of items 1 and 3 follows immediately from the definitions.

Remark that the assertion ``$\mathcal W$ is an effectively coded $\omega$-model of $\WKLo$ containing $A$'' is $\Pi^{0,A}_1$ over $\WKLo$.

\begin{proof}[\it Proof of \propref{prop:dense}.]
  Since $X \ll Y$, there exists an effectively coded $\omega$-model $\mathcal W_1\le Y$ of $\WKLo$ containing $X$ by \thmref{thm:coded}.
We then obtain another effectively coded $\omega$-model $\mathcal W_2$ containing $X$ in $\mathcal W_1$. Recall the condition {\it 3} in \thmref{thm:coded} that $g_{\mathcal W_2}$ specifies an element of nonempty $\Pi^{\mathcal W_1}_1$-class. It implies $\mathcal W_2 \ll \mathcal W_1$.
  Since $\mathcal W_1 \leq_{\T} Y$, $\mathcal W_2$ is the desired coded model.
\end{proof}

\begin{proof}[\it Proof of \propref{prop:rephrase}.]
  One direction is a simple revision. Note that $X \ll^2 Y$ is equivalent to $X' \ll Y'$.
  For the other direction, assume the $\ll^2$-basis theorem for $P$. Fix sets $A$ and $C$ such that $A' \ll C$.
  By the Jump inversion theorem, we can take a set $D$ whose jump is Turing equivalent to $C$.
  Then we can apply the $\ll^2$-basis theorem for $A$ and $D$ since $A \ll^2 D$.
  It gives us a $P$-solution $B$ such that $(A \oplus B) \ll^2 D$, which implies $(A \oplus B)' \ll D' = C$.
\end{proof}

\section{Low-like basis theorems for the splits of Ramsey's theorem}\label{constructions}

Low$_n$ basis theorems are well studied in the context of Ramsey's theorems.
The $\ll^n$-basis theorem introduced in Section \ref{llrelations} is a generalization of low$_n$-basis theorem.
We emulate some results on low$_n$ basis theorems to discuss $\ll^n$-basis theorems and derive conservation results.
Fiori, Ko\l{}odziejczyk, Wong and Yokoyama essentially discussed the relations between the $\ll^2$-basis theorem and $\Pi^1_1$-conservations in \cite{iso}.

In this section, we prove $\ll^2$-basis theorems for $\COH, \DAN[2], \DAN[\empty]$ and $\sEMinf$.
As for $\COH$ and $\DAN[(2)]$, using the decomposition of $\mathrm{RT}^2_{(2)}$ (\propref{prop:decomposition}) we conclude the provability of the $\ll^2$-basis theorem for $\mathrm{RT}^2_{(2)}$.

\begin{thm}\label{thm:headerforbasistheorem}
  \begin{enumerate}
    \item $\IN[2]$ proves the $\ll^2$-basis theorem for $\RTtt$,
    \item $\IN[2]$ proves the $\ll^2$-basis theorem for $\EMinf$,
    \item $\BIII$ proves the $\ll^2$-basis theorem for $\RTt$.
  \end{enumerate}
\end{thm}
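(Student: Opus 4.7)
The plan is to reduce the three items to four building blocks, namely $\ll^2$-basis theorems for $\COH$, $\DAN[2]$, $\DAN[\empty]$, and $\EMinf$, exploiting \propref{prop:decomposition}: $\RTtt \leftrightarrow \COH + \DAN[2]$ and $\RTt \leftrightarrow \COH + \DAN[\empty]$. A single $\ll^2$-basis invocation preserves the relation $(A \oplus B)' \ll C$ (apply the basis theorem for the second principle to the join of the original input with the solution of the first), so items (1) and (3) follow once all four blocks are in hand; item (2) is a single application for $\EMinf$.

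Common setup: by \propref{prop:rephrase} it suffices, given any $A$ and $C$ with $A' \ll C$, to produce a solution $B$ with $(A \oplus B)' \ll C$. Via \thmref{thm:coded2} and \propref{prop:dense}, fix an effectively coded $\omega$-model $\mathcal{W} = \la W_i \ra$ of $\WKLo$ with $A' \leq_{\T} \mathcal{W}$ positioned inside the $\ll$-interval $[A', C]$. Inside $\mathcal{W}$ we run a Mathias-style forcing whose conditions $(F, I)$ consist of a finite part $F$ and an infinite reservoir $I = W_i$ indexed in the model. The central tool is \propref{prop:piselect}: every $\Pi^{0,A}_{2}$ dichotomy arising during the forcing---``is $I \cap R_j$ or $I \cap R_j^c$ infinite?'' for $\COH$, ``does the reservoir meet $A_0$ or $A_1$ infinitely often?'' for $\DAN[2]$, ``which minority color profile survives?'' for $\EMinf$---becomes a $\mathcal{W}$-computable decision, so the entire sequence of conditions is $\mathcal{W}$-recursive and the generic $B$ is $\leq_{\T} \mathcal{W}$; combined with $\mathcal{W} \ll C$ this yields $(A \oplus B)' \ll C$.

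The four block-level constructions then proceed as follows. For $\COH$, iterate reservoir shrinking $I_{s+1} \in \{I_s \cap R_s, I_s \cap R_s^c\}$ via \propref{prop:piselect} and absorb $\min I_s$ into $F$; only $\INX{1}{\mathcal{W}}$ is needed. For $\DAN[2]$ within $\IN[2]$, given a stable $\Dzt$ $2$-partition with limit $f \leq_{\T} A$, the same dichotomy trick steers the reservoir into a single part; the infiniteness verification uses $\INX{2}{A}$. For $\EMinf$ within $\IN[2]$, force a fallow set by tracking, for each reservoir element and each color, its minority/majority profile, and use \propref{prop:piselect} to decide which profile survives indefinitely---the induction budget remains at the $\IN[2]$ level. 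For $\DAN[\empty]$ within $\BIII$, run the $\DAN[k]$ construction uniformly in $k$: bounding, across all $k$, the index of the ``surviving'' color class is a $\Sigma^{0}_{3}$-bounding phenomenon, which is precisely what $\BIII$ supplies.

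The main obstacle in each case will be formalization bookkeeping rather than new combinatorics: ensuring that the enumeration of conditions, the density arguments, and the limit read-off all stay recursive in $\mathcal{W}$ so that \propref{prop:piselect} is applicable at every stage, and that the inductive verification that the generic filter meets enough dense sets fits inside the claimed induction level (via \thmref{thm:rel-induction} and \thmref{thm:primitiverec}). This is precisely where first-jump control improves on the low$_2$-basis approach of \cite{CJS}: instead of needing $\BN[3]$ to obtain a low$_2$ solution and then applying second-jump control, we trade the second jump for $\mathcal{W}$-computability inside the effective $\omega$-model, saving one induction level in each decomposition piece.
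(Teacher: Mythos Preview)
Your outline has the right decomposition ($\COH+\DAN[2]$, $\COH+\DAN[\empty]$, $\sEMinf$) and the right ambient tool (forcing inside an effectively coded $\omega$-model), but it contains two genuine gaps.

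\textbf{(1) There is no jump control.} You write that ``the generic $B$ is $\le_{\T}\mathcal W$; combined with $\mathcal W\ll C$ this yields $(A\oplus B)'\ll C$.'' That inference is false: from $B\le_{\T}\mathcal W$ you get $(A\oplus B)'\le_{\T}\mathcal W'$, and nothing in your setup bounds $\mathcal W'$ below $C$. The whole point of the first-jump control argument is to interleave, besides the cohesiveness/fallowness/partition requirements, explicit jump requirements of the form $R_e\colon \Phi_e^{F}(e){\downarrow}$ and $N_e\colon \lnot\exists D\finsub I\,\Phi_e^{F\cup D}(e){\downarrow}$, and to decide each one along the construction. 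That is what makes $B'$ (not merely $B$) computable from the coded model, and hence gives $(A\oplus B)'\ll C$. Your description of the $\COH$ block (``absorb $\min I_s$ into $F$'') omits these requirements entirely; the other blocks inherit the omission.

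\textbf{(2) The $\DAN[2]$/$\DAN[\empty]$ sketch will not work as stated.} You say the dichotomy ``steers the reservoir into a single part''. But the parts $A_i$ are only $\Delta^{A}_2$; they are not sets in the inner model, so the reservoir cannot literally be shrunk to a subset of some $A_i$. The paper's construction keeps the reservoir $I$ as a \emph{computable} (indeed $\mathcal W$-indexed) infinite set and instead maintains a $k$-tuple $(F^0,\dots,F^{k-1})$ of finite sets with $F^i\subseteq A_i$; progress is made via a compactness question (``for every $k$-partition of $I\restrict t$ some piece is compatible with the current requirement''), answered \textsf{Yes}/\textsf{No} by the outer model. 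Moreover, your account of why $\BIII$ is needed for $\DAN[\empty]$ is off: $k$ is fixed in each instance. The extra strength is spent elsewhere---the forcing runs inside the $\Delta^0_2$-expansion of the ground model, so $\BIII$ below gives $\mathrm B\Sigma^{\mathcal U}_2$ above, which is used to swap $\forall e\,\exists i<k$ to $\exists i<k\,\forall e$ and isolate a single successful color. For $k=2$ that swap is trivial, which is why $\IN[2]$ suffices for $\DAN[2]$.
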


The technique used in the proof og \thmref{thm:headerforbasistheorem}.1 is widely known as the ``first-jump control argument'' in the study of Ramsey's theorem for pairs from the perspective of computability.
The formalization of the proof within $\IN[2]$ (e.g., based on the construction in \cite{slice}) is still straightforward (see also \cite{QuPaYo-wf} for another proof).
Here, we carefully sharpen the construction and obtain the first-jump control proofs for $\EMinf$ and $\RTt$ within appropriate systems.
This provides simpler and unified proofs for some known conservation theorems.
Moreover, the conservation proofs based on the above arguments can be improved to polynomial-time computable proof transformations, which will appear in the forthcoming paper by Ikari-Ko{\l}odziejczyk-Yokoyama.
For the precise statements, see Section \ref{conclusions}.
The rest of the paper is devoted to the proof of \thmref{thm:headerforbasistheorem}.

\subsection{The $\ll^2$-basis theorem for $\COH$}

$\COH$ is an important split of both $\RTtt$ and $\RTt$.
In this subsection we show that the $\ll^2$-basis theorem for $\COH$ is proved from $\IN[2]$ in order to illustrate the main technique, which is a component of the proof of \thmref{thm:headerforbasistheorem}.
Essentially this is a formalization of Theorem 6.44 in \cite{slice}.
More precisely, we prove the following theorem.

\begin{thm}\label{thm:basisforCOH}
  The following is provable within $\IN[2]$.
  For any sets $X$ and $Z$ such that $X' \ll Z'$, and an $X$-computable sequence of sets $\vec R$, there exists an $\vec R$-cohesive set $C$ such that $(X \oplus C)' \ll Z'$.
\end{thm}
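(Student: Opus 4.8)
The plan is to carry out the standard finite-extension (Mathias-style) construction of a cohesive set, but to run it \emph{inside} an effectively coded $\omega$-model of $\WKLo$ so that the bookkeeping about which sets are available stays first-order and so that the jump of the resulting set remains controlled. First I would apply \propref{prop:rephrase}-style reasoning together with \thmref{thm:coded2}: since $X' \ll Z'$, equivalently $X \ll^2 Z$, and by \lemref{lem:ll} (using that $\BN[2]$, which follows from $\IN[2]$, holds relative to the relevant sets) there is a $\Delta^{Z'}$-definable set $\mathcal W = \la W_i \ra$ coding an $\omega$-model of $\WKLo$ that contains $X'$ (and hence $X$ and the sequence $\vec R$, coded via indices). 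The point of working in $\mathcal W$ is that membership ``$W_i$ is infinite'' is uniformly decidable from $\mathcal W$ via \propref{prop:piselect}: given the current reservoir $W_i$ and the next set $R_n$ (say $R_n = W_{j_n}$), the function $h$ of \propref{prop:piselect} tells us in which of $W_i \cap R_n$, $W_i \cap R_n^c$ is infinite, and the operators $h_4,h_5,h_6$ of the Example let us form that intersection as a new index in $\mathcal W$.

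Next I would define, by $\mathcal W$-primitive recursion (legitimate by \thmref{thm:primitiverec}, since $\In{\mathcal W}{1}$ holds — indeed $\In{X \oplus \mathcal W}{1}$ holds by the choice of $\mathcal W$), a sequence of ``conditions'': a sequence of indices $i_0, i_1, i_2, \dots$ with $W_{i_0}$ infinite (take $W_{i_0} = \mathbb N$ or $= $ the first infinite $R$-reservoir) and $W_{i_{n+1}} = W_{i_n} \cap R_n^{(\varepsilon_n)}$ where $\varepsilon_n \in \{0,1\}$ is chosen by $h$ so that $W_{i_{n+1}}$ stays infinite, together with a strictly increasing sequence $c_0 < c_1 < \dots$ where $c_n$ is the least element of $W_{i_{n+1}}$ above $c_{n-1}$. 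All of this is a single $\mathcal W$-recursive procedure, so the function $n \mapsto \la i_n, c_n\ra$ exists as a set $\le_{\T} \mathcal W$. Then $C := \{ c_n : n \in \mathbb N\}$ is $\mathcal W$-computable, infinite (it is the image of a strictly increasing function, and $\IN[2]$ gives enough induction to see the function is total and unbounded), and $\vec R$-cohesive: for each $n$, every element $c_m$ with $m > n$ lies in $W_{i_{n+1}} \subseteq R_n^{(\varepsilon_n)}$, so $C \subseteq^* R_n$ or $C \subseteq^* R_n^c$.

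Finally I must verify the jump control, $(X \oplus C)' \ll Z'$. Since $C \le_{\T} \mathcal W$ and $X \le_{\T} \mathcal W$ (as $X = W_0$ up to the coding), we have $X \oplus C \le_{\T} \mathcal W$, hence $(X \oplus C)' \le_{\T} \mathcal W'$; and because $\mathcal W$ is $\Delta^{Z'}$-definable, $\mathcal W \le_{\T} Z'$, so $\mathcal W' \le_{\T} Z''$ — which is not yet what we want. The correct route is \propref{prop:rephrase} / \propref{prop:dense}: we do not take $\mathcal W$ itself $\le_{\T} Z'$ but rather, using $X' \ll Z'$ and \propref{prop:dense}, interpose an effectively coded $\omega$-model $\mathcal W$ with $X' \ll \mathcal W \ll Z'$ (so $\mathcal W$ has PA-degree sitting strictly below $Z'$ in the $\ll$-order, not merely Turing-below). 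Then $(X \oplus C)' \le_{\T} \mathcal W'$, and from $\mathcal W \ll Z'$ — i.e. $\mathcal W$ has a PA-degree relative to $Z'$, equivalently every $\Pi^0_1$ class with a $\Delta^{\mathcal W \oplus \mathcal W}$ ... — one deduces $\mathcal W' \ll Z'$ by the basic preservation properties of the $\ll$-relation (PA-degrees are closed under jump in the relevant sense: $U \ll V \Rightarrow U' \ll V'$ is false, but $U \ll^2 V \Leftrightarrow U' \ll V'$, and here the construction should actually be arranged so that $\mathcal W$ is taken with $(X \oplus C)' \le_{\T} \mathcal W$ directly by including $(X\oplus C)'$ as a member of an $\omega$-model). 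Concretely: build $\mathcal W$ containing $X'$, run the construction to get $C \le_{\T} \mathcal W$, observe $(X \oplus C)' \le_{\T} \mathcal W$ (the jump of a $\mathcal W$-computable set is computable from $\mathcal W'$, but $\mathcal W$ being an $\omega$-model closed under jump means $\mathcal W' =_{\T} \mathcal W$ in the sense that $\mathcal W$ contains its own jump — this is exactly condition 3 of \thmref{thm:coded} applied to the universal $\Pi^0_1$ sentence), hence $(X\oplus C)' \le_{\T} \mathcal W \ll Z'$ gives $(X \oplus C)' \ll Z'$ since $\ll$ is downward closed in its left argument under $\le_{\T}$. \textbf{The main obstacle} I anticipate is precisely this last point: making the jump-control bookkeeping rigorous within $\IN[2]$ — ensuring that the $\mathcal W$-recursive construction of the sequence of conditions genuinely goes through with only $\Sigma^0_2$ induction (the totality of the condition sequence and the unboundedness of $C$ are the places where induction is used), and correctly tracking that $(X \oplus C)'$, computed from a set below $\mathcal W$, stays $\ll Z'$ rather than only $\le_{\T} Z''$. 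This is where the care in \propref{prop:dense}, \propref{prop:rephrase}, and \lemref{lem:ll} is essential, and I expect the proof to proceed by explicitly invoking \propref{prop:dense} to place $\mathcal W$ strictly between $X'$ and $Z'$ in the $\ll$-ordering.
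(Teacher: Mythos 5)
There is a genuine gap, and it sits exactly where you anticipated trouble: jump control. Your construction produces $C$ directly as $\{c_n : n \in \mathbb{N}\}$ where $c_n$ is the least new element of the shrunk reservoir $W_{i_{n+1}}$. That does give a $\vec R$-cohesive set with $C \le_{\T} \mathcal W$, but it does \emph{nothing} to bound $C'$ below $\mathcal W$; you only get $(X\oplus C)' \le_{\T} \mathcal W'$, which is $Z''$-level, not $Z'$-level. Your proposed repair is the claim that ``$\mathcal W$ being an $\omega$-model closed under jump means $\mathcal W' =_{\T} \mathcal W$ \dots this is exactly condition 3 of \thmref{thm:coded}.'' That claim is false: condition 3 of \thmref{thm:coded} realizes the weak K\H{o}nig's lemma --- it assigns, to each index $e$ of a \emph{nonempty} $\Pi^{W_i}_1$ class, an index of a member of that class --- not the Turing jump. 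An $\omega$-model of $\WKLo$ closed under jump would be a model of $\ACAo$, and the whole point of the low-like setup is that it is not. The alternative suggestion of ``including $(X\oplus C)'$ as a member of the $\omega$-model'' is circular, since $C$ is defined by recursion from $\mathcal W$.

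What the paper does differently is build in explicit jump-control requirements into the Mathias forcing, i.e., the genuine ``first-jump control'' argument. At each stage, in addition to the cohesiveness requirement $D_n$ ($I\subseteq R_n$ or $I\subseteq R_n^c$, handled by \propref{prop:piselect} as you say) and the length requirement $E_n$, there is a pair of requirements $R_e \lor N_e$: either force $\Phi_{e}^{F}(e)\defined$ by extending $F$ with a suitable finite $D\finsub I$, or, if no such $D$ exists, lock in the negative fact $\neg\exists D\finsub I\,\Phi_e^{F\cup D}(e)\defined$ by committing to $I$. Because $\effcodf$ (the outer coded $\omega$-model with $X'\in\effcodf$ and $\effcodf\ll Z'$) can answer the $\Sigma^{I}_1$ question ``is there such a $D$?'', the construction is $\effcodf$-recursive, and the verification of $(X\oplus C)'\le_{\T}\effcodf$ is then direct: find the stage where $R_e\lor N_e$ is met and ask $\effcodf$ which branch held. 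Note also that the paper uses \emph{two} nested coded models ($\effcods\in\effcodf$, with $\effcods'\equiv X'$ and $\effcodf\ll Z'$): conditions live in $\effcods$, the forcing is driven by $\effcodf$, and $\In{\effcodf}{1}$ (available since $\effcodf\in\Delta^0_2(M,\cS)$ and the ambient model satisfies $\IN[2]$) is what licenses the $\effcodf$-primitive recursion for the stage-by-stage construction. Your outline has the right skeleton for the cohesiveness and complexity-of-the-reservoir bookkeeping, but without the $R_e/N_e$ machinery the resulting $C$ simply has no reason to have a controlled jump, and no amount of repositioning $\mathcal W$ in the $\ll$-order fixes that.
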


\begin{proof}
Let $(M, \cS)$ be a model of $\IN[2]$.
Fix $X$ and a set $Z$ such that $X' \ll Z'$ in ${\cS}$.
Consider the model $(M, \Delta^0_2(M,\cS)) \models \RCAo$, where $\Delta^0_2(M,\cS)$ is the collection of all definable set $Z\subseteq M$ such that $(M, \cS)\models Z\le_{\T} X'$ for some $X\in \cS$.

As a preparation for the proof, let $\effcodf$ and $\effcods$ be effectively coded $\omega$-models of $\WKLo$ such that $X' \in \effcodf, \effcodf \ll Z', \effcods \in \effcodf, X \in \effcods$, and $\effcods' \equiv X'$.
We construct a set $C$ that satisfies the following.
(i) infinite,
(ii) $C \subseteq^\ast R_i$ or $C \subseteq^\ast R_i^c$ for each $i$, and
(iii) the jump of $C$ is Turing reducible to the coded $\omega$-model.
Here we use \thmref{thm:coded}.4 to satisfy $\effcods' = X'$.
We perform Mathias forcing computably in $\effcodf$.
Note that $\effcodf \in \Delta^0_2(M,\cS)$ since $Z' \in \Delta^0_2(M,\cS)$; thus we can use $\INX{1}{\effcodf}$ in this proof.
Here, we use a notion of Mathias forcing that consists of pairs $p = (F,I)$ such that $F \in \effcods$ is a finite set, $I \in \effcods$ is an infinite set, and $\max F < \min I$.
Note that each $F$ is represented by its index of finite set, and each $I$ is represented by its index in the coded model $\effcods$.
We define a partial order $\preceq$ on $\mathbb P$ as follows.
\[
(F,I) \preceq (E, H) :\Leftrightarrow F \supseteq E \land F \setminus E \subseteq H \land I \subseteq H.
\]

To ensure that a generic set satisfies these conditions, we construct a sequence of conditions from $\mathbb P$, and we consider the following requirements for $(F,I) \in \mathbb P$.
Remark that each of them is $\Sigma_1^{I}$ or $\Pi_1^{I}$.

\begin{description}
  \item[$(D_n)$] $I \subseteq R_n \lor I \subseteq R_n^c$,
  \item[$(E_n)$] $|F| \geq n$,
  \item[$(R_e)$] $\Phi_{e,\max F}^{F}(e) \defined$,
  \item[$(N_e)$] $\not\exists D \subseteq_{\text{fin}} I (\Phi_e^{F\cup D}(e) \defined )$.
\end{description}

We construct a sequence of conditions $\la p_s \ra_{s \in \mathbb N}$ stage by stage, satisfying the following.

\begin{equation}\label{eq:cohdesired}
  \forall e \exists s (p_s \text{ satisfies } E_e \land (R_e \lor N_e) \land D_e ).
\end{equation}

At stage $s+1$, pick the least requirement not satisfied yet among $\{ E_n\}_n$,\\
$\{ R_e \lor N_e \}_e$ and $\{ D_n\}_n$.
Note that this selection can be determined within $\effcodf$.

For the requirement $E_n$:
Let $D$ consist of the least $(n - |F_s|)$ elements in $I_s$,
and define $F_{s+1} = F_s \cup D, I_{s+1} = I_s \setminus [0, \max D]$.

For the requirement $R_e \lor N_e$:
Ask $\effcodf$ whether there exists a $D \finsub I_s$ such that $\Phi_{e,\max D}^{F_s \cup D}(e) \defined$.
If the answer is {\sf Yes}, then we fix such a $D \subseteq I_s$ to define $F_{s+1} = F_s \cup D$ and $I_{s+1} = I_s \setminus [0, \max D]$.
If the answer is {\sf No}, then we define $F_{s+1} = F_s$ and $I_{s+1} = I_s$.

For the requirement $D_n$, we define $F_{s+1} = F_s$.
We make $\effcods$ select an infinite set from either $I_s \cap R_n$ or $I_s \cap R_n^c$ and define it to be $I_{s+1}$.

\begin{rem}
  Recall that $\vec R$ is a $X$-computable and $\effcods$ can select an infinite set from either $I \cap R$ or $I \cap R^c$ when $I$ is infinite (\propref{prop:piselect}).
  Additionally, all other operations and judgments appearing above can be done within the effectively coded $\omega$-model $\effcodf$.
\end{rem}

After the construction we define $C = \cup_s F_s$.
We verify this $C$ satisfies the desired conditions.
(i) Infiniteness is clear since for all $n$ there exists a segment $F_s$ of $C$ satisfying $E_n$.
(ii) For $\vec R$-cohesiveness, we fix $e$ and $s$ such that $p_s$ satisfies $D_e$.
From the construction and the definition of $\mathbb P$, every element that would be added into $C$ after the stage $s$ is included in $I_s$. Then $C \setminus R_e \subseteq F_s$ or $C \setminus R_e^c \subseteq F_s$ holds.
This implies $C \subseteq^{\ast} R_e$ or $C \subseteq^{\ast} R_e^c$.
(iii) The verification of $(C \oplus X)' \leq_{\T} \effcodf$ follows a standard argument.
We fix $e$ and take an $s$ such that $p_s$ satisfies $R_e \lor N_e$ and ask $\effcodf$ which one holds.
If $p_s$ satisfies $R_e$ then $\Phi_e^{F_s}(e) \defined$.
This implies $e \in C'$ since $F_s \subseteq C$.
If $p_s$ satisfies $N_e$ then $e \notin C'$ since every element which is added into $C$ after stage $s$ is an element of $I_s$.
This implies $C'$ is computable from $\effcodf$.
Since $\effcodf \ll Z'$, $C$ satisfies desired complexity.
\end{proof}

\begin{rem}
  \begin{enumerate}
    \item Recall that we can use $\INX{1}{\effcodf}$, which helps us to use $\effcods$-primitive recursion (\thmref{thm:primitiverec}).
    Then the above verification process can be carried out successfully.
    \item Proving that the jump of some constructed set is computable from a fixed oracle (an effectively coded $\omega$-model in our cases) is a common technique in recursion theory.
    For discussions in the context of the components of Ramsey's theorems, see in \cite{CJS} or \cite{slice}.
    Indeed we repeat this type of proof in the following sections, where we omit such proofs with only a brief mention.
  \end{enumerate}
\end{rem}

\subsection{The $\ll^2$-basis theorem for Erd\H{o}s-Moser principle}

Recall that the Erd\H{o}s-Moser principle is a meaningful split of Ramsey's theorem.
By a similar discussion to that in the previous section, we prove the $\ll^2$-basis theorem for $\EMinf$.
More precisely, we prove the following theorem.

\begin{thm}\label{thm:basisforEM}
  The following is provable within $\IN[2]$.
  For any number $k$, $k$-coloring $c$ and any set $Z$ such that $c' \ll Z'$,
  there exists an infinite set $B$ such that $c$ is fallow on $B$ and $(c \oplus B)' \ll Z'$.
\end{thm}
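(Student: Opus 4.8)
The plan is to mirror the structure of the proof of \thmref{thm:basisforCOH}, replacing cohesiveness requirements by the requirements needed to produce an infinite set on which $c$ is fallow, while keeping the jump of the constructed set computable from an effectively coded $\omega$-model of $\WKLo$ that lies strictly below $Z'$. First I would set up the same ambient data: working in a model $(M,\cS)\models\IN[2]$, fix $k$, a $k$-coloring $c$, and $Z$ with $c'\ll Z'$; pass to $(M,\Delta^0_2(M,\cS))\models\RCAo$; and pick effectively coded $\omega$-models $\effcodf,\effcods$ of $\WKLo$ with $c'\in\effcodf$, $\effcodf\ll Z'$, $\effcods\in\effcodf$, $c\in\effcods$, and $\effcods'\equiv c'$. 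Note $\effcodf\in\Delta^0_2(M,\cS)$ since $Z'\in\Delta^0_2(M,\cS)$, so $\INX{1}{\effcodf}$ and hence $\effcods$-primitive recursion (\thmref{thm:primitiverec}) are available.

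Next I would run Mathias forcing computably in $\effcodf$ with conditions $p=(F,I)$, $F\in\effcods$ finite, $I\in\effcods$ infinite, $\max F<\min I$, ordered by $\preceq$ exactly as before, but now maintaining the additional invariant that $c$ is fallow on $F\cup\{x\}$ for every $x\in I$ and, more strongly, that all of $F\cup I$ sits inside a single ``transitive-like'' block for the tournament/coloring — concretely, that for each pair $a<b$ in $F$ the color $c(a,b)$ is already decided, and that $I$ has been thinned so that every $x\in I$ receives a uniform ``color profile'' against $F$. The requirements split into the familiar families: $(E_n)$ forcing $|F|\ge n$ (handled by absorbing the least $n-|F|$ elements of $I$); $(R_e)\lor(N_e)$, the standard pair of $\Sigma^I_1/\Pi^I_1$ requirements that decide membership of $e$ in $(c\oplus C)'$ by asking $\effcodf$ whether some finite $D\finsub I$ witnesses $\Phi_{e,\max D}^{F\cup D}(e)\defined$; and, in place of the cohesiveness requirements $(D_n)$, a family of ``fallowness'' requirements that ensure the limit set is fallow. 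For the latter, given a condition $(F,I)$ and the next element $y$ one could add, one uses \propref{prop:piselect} inside $\effcods$: for each already-chosen $x\in F$ there are finitely many color values, so one partitions $I$ according to the color profile of its elements against $F\cup\{y\}$ and asks $\effcods$ to select an infinite subclass — this keeps the block structure that forces $c(x,z)\in\{c(x,y),c(y,z)\}$ for $x<y<z$ in the final set. Crucially each such requirement is again $\Sigma^I_1$ or $\Pi^I_1$, so the whole bookkeeping is carried out within $\effcodf$, and $\IN[2]$ suffices to run the $\omega$-length construction and verify \equationref{eq:cohdesired}-style density by a $\Sigma^0_2$ induction.

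After the construction set $B=\bigcup_s F_s$. Infiniteness follows from the $(E_n)$ requirements; fallowness of $c$ on $B$ follows because, for any $x<y<z$ in $B$, there is a stage $s$ after which $y$ and $z$ are added from an $I_t$ whose elements share a fixed color profile against the segment containing $x$ and $y$, which forces $c(x,z)\in\{c(x,y),c(y,z)\}$; and $(c\oplus B)'\le_{\T}\effcodf$ by the same argument as in \thmref{thm:basisforCOH} using the $(R_e)\lor(N_e)$ dichotomy. Since $\effcodf\ll Z'$, we get $(c\oplus B)'\ll Z'$. The main obstacle I expect is the fallowness bookkeeping done \emph{uniformly in $k$} within a system with only $\IN[2]$: one must verify that the partition of $I$ into color profiles against a finite set $F$ (which a priori involves $|F|$ many $k$-valued parameters) can be managed so that the selection via \propref{prop:piselect} stays $\Pi^I_2$ and the induction needed to show the relevant requirements are eventually met remains within $\Sigma^0_2$; getting the quantifier complexity and the induction strength to line up — rather than the combinatorics of the Erd\H{o}s--Moser argument itself — is where the care is required.
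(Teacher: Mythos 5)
Your proposal correctly mirrors the Mathias-forcing-in-a-coded-$\omega$-model skeleton of the $\COH$ proof, but it misses the two ideas that actually make the Erd\H{o}s--Moser case go through.

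First, the paper does not attack $\EMinf$ directly: it first uses $\RCAo \plus \IN[2] \vdash \COH \plus \sEMinf \to \EMinf$ (apply $\COH$ to the columns of $c$ and restrict to a cohesive set to make $c$ stable) and then proves the $\ll^2$-basis theorem only for \emph{stable} colorings, with the $\Delta^{0}_2$ limit partition $A_i = \{x : \lim_y c(x,y)=i\}$ in hand. Your plan never reduces to stability and tries to maintain the ``uniform color profile of $I$ against $F$'' invariant for an arbitrary $c$ by repeatedly thinning $I$ with \propref{prop:piselect}. This breaks at the point where you satisfy an $(R_e)$-type requirement by adding a finite fallow $D\finsub I$ to $F$: there is no guarantee that any infinite color-profile class of $I$ then preserves the invariant that $c$ is fallow on $F\cup D\cup\{z\}$. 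Concretely, take $F=\emptyset$, $D=\{d_1<d_2\}\subseteq I$ with $c(d_1,d_2)=0$, $c(d_1,z)=1$ and $c(d_2,z)=0$ for every $z\in I$ above $d_2$; then $c$ is fallow on $D$, $(\emptyset,I)$ is a valid condition, yet no $z$ in the reservoir can extend $D$ while keeping fallowness. In the stable setting this pathology is blocked because $d_1,d_2$ lie in different $A_i$'s and the construction never adds them together.

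Second, and relatedly, you replace the paper's key $\Sigma^{I}_1$ question \equationref{eq:EMquestion} -- ``does there exist $t$ such that \emph{every} $k$-partition of $I\restrict t$ has a class compatible with $\widetilde J$?'' -- with the naive $\COH$-style query about an unconstrained witness $D\finsub I$. The partition question is not optional: it is what lets the construction touch the $\Delta^{0}_2$ sets $A_i$ without having them in the coded model. If the answer is \textsf{Yes}, the partition $\la A_i\cap I\restrict t\ra_{i<k}$ in particular has a compatible class, so the positive extension $E$ is drawn from a \emph{single} limit-color class and, by stability, trimming $I$ past a stabilization bound $m$ restores both invariants (v) and (vi). If the answer is \textsf{No}, $\WKLo$-compactness inside $\effcods$ yields a full $k$-partition of $I$ with no compatible class, one applies \propref{prop:piselect} $k$ times to pick an infinite class $C_i$, and $(F,C_i)$ \emph{negatively} forces $\widetilde J$. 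This Cholak--Jockusch--Slaman ``look at all finite partitions'' move is the real content of the first-jump control here; the quantifier-counting and induction issues you flag as the main obstacle are routine once this is in place, whereas the combinatorial obstacle above is what actually forces the stable reduction.
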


\begin{proof}
  From the usual discussion we cen prove $\RCAo \plus \IN[2] \vdash \COH \plus \sEMinf \to \EMinf$.
  Thus, it suffices to prove the $\ll^2$-basis theorem for $\sEMinf$.

  Let $(M, \cS)$ be a model of $\IN[2]$.
  Fix a number $k$, a stable $k$-coloring $c$, and a set $Z$ such that $c' \ll Z'$ in ${\cS}$.
  Consider the model $(M, \Delta^0_2(M,\cS)) \models \RCAo$.

  As preparation for the proof, let $\effcodf$ and $\effcods$ be effectively coded $\omega$-models of $\WKLo$ such that $c' \in \effcodf, \effcodf \ll Z', \effcods \in \effcodf, c \in \effcods$, and $\effcods' \equiv X'$.
  Here we use \thmref{thm:coded}.4 to satisfy $\effcods' = c'$.
  We perform Mathias forcing computably in $\effcodf$.

  We construct a set $B$ such that
  (i) infinite,
  (ii) for any $x,y,z\in B$ such that $x<y<z$, $c(x,z) \in \{ c(x,y), c(y,z)\}$,
  (iii) $B'$ is Turing reducible to the coded $\omega$-model.

  Define $A_i = \{ x \mid \lim_y c(x,y) = i\}$ for each $i<k$.
  We define a notion of Mathias forcing $\mathbb P$ as the set of pairs $p = (F, I)$ satisfying the following.
  (i) $I \in \effcods$ isinfinite,
  (ii) $F \in \effcods$ is finite,
  (iii) $\max F < \min I$,
  (iv) $c$ is fallow on $F$,
  (v) $\forall z \in I$ $c$ is fallow on $F \cup \{ z \}$,
  and (vi) $\forall x \in F, \forall z,z' \in I ( c(x,z) = c(x, z') )$.
  We call an $(F, I)$ {\it precondition} when it satisfies (ii)-(vi),
  and call it {\it condition} when it satisfies (i)-(vi).
  Remark that for a given $(F,I) \in \mathbb P$, $c$ is fallow on $F \cup E$ whenever $E$ is a finite subset of $I$ on which $c$ is fallow. (iv) and (v) guarantee this.

We consider the following statements for a precondition $p = (F,I)$.

\begin{description}
  \item[($E_n^+$)] $|F| \geq n$,
  \item[($R_e$)] $\Phi_{e, \max F}^{F}(e) \defined$,
  \item[($N_e$)] $\not \exists D \finsub I ( c$ is fallow on $F \cup D \land \Phi_e^{F \cup D}(e) \defined )$.
\end{description}

Our requirements for Mathias forcing are $E_n^+, R_e \lor N_e$ for natural numbers $n$ and $e$.

Through the construction, we satisfy these requirements while preserving fallowness.
As terminology for the Mathias forcing construction, we define the following three concepts, {\it positively forcing}, {\it compatibility} and {\it negatively forcing}.

\begin{dfn}
  Let $(F,I)$ be a (pre)condition, $\widetilde J$ be one of $E_n^+$ or $R_e$.
  \begin{enumerate}
    \item $(F,I)$ {\it positively forces} $\widetilde{J}$ when $F$ satisfies $\widetilde{J}$,
    \item $(F,I)$ is {\it compatible} with $\widetilde{J}$ when $\exists E \subseteq I ( F \cup E$  satisfies $\widetilde{J} \land c$ is fallow on $F \cup E)$,
    \item $(F,I)$ {\it negatively forces} $\widetilde{J}$ when $(F,I)$ is not compatible with $\widetilde{J}$.
    In other words, $\forall E \subseteq I$ ($c$ is fallow on $F \cup E \to F \cup E$ does not satisfy $\widetilde{J}$).
    Furthermore, in the case $\widetilde{J} = R_e$ and $(F,I)$ satisfies $N_e$.
  \end{enumerate}
\end{dfn}

\begin{rem}
  \begin{enumerate}
    \item For any condition $(F,I)$ and $n$, $(F,I)$ is compatible with $E_n^+$.
    In other words, we can find an extension to satisfy $E_n^+$ at any stage where $E_n^+$ is focused on.
    \item This categorization for ``satisfaction'' is consistent with the basic strategy of the construction along Mathias forcing.
    Roughly speaking, at each stage we extend a condition $p$ (i) to a condition which positively forces a requirement focused on at the stage if $p$ is compatible with that requirement,
    (ii) to a condition which negatively forces the requirement if $p$ is not compatible with the requirement.
  \end{enumerate}
\end{rem}

At stage $s+1$, for fixed $(F,I) \in \mathbb P$ let $\widetilde{J}$ be one of $E_n^{+}$ or $R_e$ that has not been forced until the stage $s$.
We ask the following $\Sigma^{I}_1$ question which $\effcodf$ can answer.
\begin{equation}\label{eq:EMquestion}
\exists t (\forall \la C_i \ra_{i<k} : \text{ partition of } I \restrict t) (\exists i<k) ((F, C_i) \text{ is compatible with } \widetilde{J}).
\end{equation}
Depending on the answer, we proceed with the construction.
Note that this is $\Sigma^{I}_1$ because we work within an effectively coded $\omega$-model of $\WKLo$ and $\WKLo$ has the compactness.

{\bf Case 1} The answer is {\sf Yes}. Fix such a $t$.
Remark that $\la A_i \cap I \restrict t \ra_{i<k}$ is a $k$-partition of $I \restrict t$.
Then there exists an $i<k$ such that $(F, A_i \cap I\restrict t)$ is compatible with $\widetilde{J}$.
Pick a finite set $E \subseteq A_i \cap I \restrict t$ such that $F \cup E$ satisfies $\widetilde J$ and $c$ is fallow on $F \cup E$.
Define $m$, using $\effcodf$, as the least number such that $\forall x \in F \cup E \forall m' \geq m ( c(x,m) = c(x,m') )$.
Note that such $m$ exists since $c$ is stable.
Moreover, $\effcodf$ decides this value and $E$ is a subset of $A_i$.
Now $(F \cup E, I \setminus [0,m])$ is a suitable extension of $(F,I)$ which positively forces $\widetilde{J}$.

{\bf Case 2} The answer is {\sf No}.
It means $\forall t (\exists \la C_i\ra_{i<k}:$ partition of $I \restrict t ) (\forall i<k) ((F,C_i)$ negatively forces $\widetilde{J})$.
Since $\effcods$ is an effectively coded $\omega$-model of $\WKLo$, it picks a $k$-partition $\la C_i \ra_{i<k}$ of $I$ such that $(\forall i<k) ((F,C_i)$ negatively forces $\widetilde{J})$.
Fix this $\la C_i \ra_{i<k}$.
In the case $\widetilde{J} = R_e$, this implies $\not\exists D \finsub C_i (c$ is fallow on $F \cup D \land \Phi_e^{F \cup D}(e) \defined)$.
Then $N_e$ holds for $(F, C_i)$ for any $i<k$.
We then fix an infinite set $C_i$ to take an appropriate extension $(F, C_i)$.
Note that since $I$ is infinite, at least one $C_i$ must be infinite. We can apply \propref{prop:piselect} (relative to $\effcods$) $k$ times to select an $i$ for which $C_i$ is infinite.

Finally we have a sequence of conditions $\la p_s \ra_s$ such that
\[
\forall e,n \exists s ( (F_s, I_s) \text{ satisfies } E_n^{+} \land (R_e \lor N_e) ).
\]

To conclude \thmref{thm:basisforEM}, we define $B = \cup_s F_s$.
By the construction, it is clear that $c$ is fallow on $B$.
The argument for the infiniteness and for the Turing reducibility of the jump are the same as in the proof of \thmref{thm:basisforCOH}.
\end{proof}

\begin{rem}
  \begin{enumerate}
    \item The basic idea is that we extend finite sets while preserving their fallowness and positive information for $B'$.
    We restrict infinite reservoirs to keep negative information for $B'$ regarding at least their ``fallow candidates''.
    \item The essential idea of the construction is that we consider the finite partitions, making the next condition negatively force a $\Pi^0_1$ requirement if no candidate positively forces the corresponding $\Sigma^0_1$ requirement.
    This is a standard idea in this topic originating from the results by Cholak, Jockusch and Slaman (\cite{CJS}).
    As mentioned in the proof, it is important that we formally realize this method because we work within a coded model of $\WKLo$.
    We aldo use this technique in the next proof.
  \end{enumerate}
\end{rem}

\subsection{The $\ll^2$-basis theorem for $\DAN[2]$ and $\DAN[\empty]$}\label{construction}

As already mentioned, $\DAN[2]$ and $\DAN[\empty]$ are an important split of $\RTtt$ and $\RTt$ respectively.
Moreover, they and the $\ll^2$-basis theorem for them play important roles in our results.
In this subsection, we prove the $\ll^2$-basis theorem for $\DAN[\empty]$ within $\BIII$.
More precisely, we prove the following.

\begin{thm}\label{thm:basisforD2}
  $\BIII$ proves the following statement.
  For any number $k$, set $X$, $\Delta^{X}_2$ $k$-partition $\la A_i \ra_{i<k}$ of $\mathbb N$ and any set $Z$ such that $X' \ll Z'$,
  there exists an infinite set $B$ such that $(X \oplus B)' \ll Z'$ and $B \subseteq A_i$ for some $i$.
\end{thm}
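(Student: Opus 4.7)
The plan is to extend the Mathias forcing constructions of Theorems \ref{thm:basisforCOH} and \ref{thm:basisforEM} by a $k$-ary branching over the partition classes $A_0,\dots,A_{k-1}$, carefully using $\BIII$ to manage the initial choice and subsequent stabilization of the committed color. First, fix a model $(M,\cS) \models \BIII$ together with the data $k, X, \la A_i\ra_{i<k}, Z$ satisfying $X' \ll Z'$, and an $X$-recursive function $f : \N \times \N \to k$ witnessing the $\Delta^X_2$-partition via $n \in A_i \iff \lim_s f(n, s) = i$. Using Propositions \ref{prop:dense} and \ref{prop:rephrase} together with Theorem \ref{thm:coded2}, take effectively coded $\omega$-models $\effcodf, \effcods$ of $\WKLo$ with $X' \in \effcodf \ll Z'$, $\effcods \in \effcodf$, $X \in \effcods$, and $\effcods' \equiv_T X'$, so that each $A_i$ is $\effcodf$-recursive. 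Theorem \ref{thm:rel-induction}.4 converts $\Bn{X}{3}$ into $\Bn{X'}{2}$, hence $\Sigma^{X'}_2$-bounding is available, yielding in particular that at least one $A_i$ is infinite.

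Next I would perform Mathias forcing with conditions $p = (F, I, i)$ where $i < k$ is a committed color, $F \finsub A_i$ is certified via $X'$, $I \in \effcods$ is infinite with $\min I > \max F$, and $I \cap A_i$ is infinite (a $\Pi^{X'}_2$-property decidable by $\effcodf$). The requirements are $E_n$ (sizes) and $R_e \lor N_e$ (first-jump control), as in the EM proof. For $R_e$, I ask $\effcodf$ the $\Sigma_1^{I,X'}$-style question: \emph{does there exist a color $j < k$, a stage $t$, and a finite $E \subseteq I$ with $f(x,t) = j$ for all $x \in E$, such that $\Phi^{F \cup E}_e(e)\defined$ and $I \cap A_j$ remains infinite?} A \textsf{Yes} answer gives a positive-forcing extension (possibly switching the committed color to $j$); a \textsf{No} answer, combined with a $k$-fold WKL-partition inside $\effcods$ along the approximated color classes $\{x \in I : f(x,t) = j\}$ exactly as in the EM proof, yields an infinite subreservoir on which $N_e$ is negatively forced, together with a color $j$ for which $I \cap A_j$ is infinite (guaranteed by the $\BIII$-pigeonhole above). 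Proposition \ref{prop:piselect} inside $\effcodf$ manages the subsequent reservoir shrinkages.

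The decisive new step is showing that the committed color $i(p_s)$ stabilizes, so that $B = \bigcup_s F_s \subseteq A_{i^*}$ for some $i^*$. I would enforce the monotonicity rule that once a color $j$ is abandoned (i.e., a later shrinkage makes $I \cap A_j$ finite), it is never re-committed; since there are only $k$ colors this bounds the total number of changes by $k$, while the availability of some color $j'$ with $I \cap A_{j'}$ infinite at every stage follows from $\Sigma^{X'}_2$-bounding applied to the finitely many $A_j$-finiteness witnesses. Combined with the standard jump-control verification of \thmref{thm:basisforCOH}, this yields $B \subseteq A_{i^*}$, $B$ infinite, and $(X \oplus B)' \le_T \effcodf$, hence $(X \oplus B)' \ll Z'$ by Proposition \ref{prop:rephrase}.

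\textbf{The main obstacle} is precisely this color-stabilization argument: the global constraint that $B$ lies in a single $A_{i^*}$ cannot be witnessed by any finite fragment of the construction---unlike the local fallowness invariant in \thmref{thm:basisforEM}---and requires a genuine $\Pi^{X'}_2$-pigeonhole, which is exactly what $\Bn{X'}{2}$ (equivalently $\BIII$) supplies. Formalizing the interaction between this pigeonhole, the dynamic Mathias construction, and the dual-model framework $(\effcodf, \effcods)$ for computing with the $\Delta^X_2$-partition is expected to be the most delicate aspect of the proof.
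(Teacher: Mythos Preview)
Your approach is quite different from the paper's, and it has a real gap. The paper never commits to a color during the construction: its Mathias conditions are tuples $(F^0,\dots,F^{k-1},I)$ with all $k$ finite stems carried in parallel, each $F^i\subseteq A_i$. At every stage one requirement (indexed jointly by an $e$ and a color $i$) is attacked via a pure $\Sigma_1^{I_s}$ question---whether \emph{every} $k$-partition of some bounded $I_s\restrict t$ has a block making the requirement compatible---and the true partition $\langle A_j\cap I_s\restrict t\rangle_{j<k}$ is only invoked for that fixed finite $t$ after a \textsf{Yes} answer. The sole appeal to $\BIII$ comes at the very end: since some color advances at every stage one gets $\forall e\,\exists i<k\,\exists s\,(\text{the }\langle e,i\rangle\text{-th requirement is decided by }p_s)$, and a single application of $\mathrm{B}\Sigma_2^{\effcodf}$ flips the outer quantifiers to extract the winning $i$. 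No color is ever abandoned and no requirement is ever lost.

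Your single-thread scheme breaks at the switching step. In the \textsf{Yes} case you find $E$ of approximate color $j$ with $\Phi_e^{F\cup E}(e)\defined$, but your invariant forces $F\subseteq A_i$; if $j\neq i$ then $F\cup E$ lies in no single $A_\ell$, so you cannot take it as the new stem, while discarding $F$ throws away every $R_{e'}$ and $E_n$ already forced and you provide no restart mechanism. More fundamentally, the complexity bookkeeping fails: the clause ``$I\cap A_j$ is infinite'' that you build into both your conditions and your forcing question is $\Pi_2^{X'}$ (the matrix $m\in I\wedge m\in A_j$ is merely $\Delta_1^{X'}$, so infinitude is $\forall\exists$ over it), and $\effcodf$, which contains $X'$ but not $X''$, can neither decide such statements nor select among them. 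Proposition~\ref{prop:piselect} requires the \emph{jump} of the parameter to lie in the model, so it handles $\Pi_2^{\effcods}$ disjuncts (via $\effcods'\equiv_{\T}X'\in\effcodf$) but not $\Pi_2^{X'}$ ones. The paper sidesteps both problems precisely by keeping all colors alive and by quantifying only over arbitrary finite partitions of $I_s\restrict t$, never placing the $\Delta_2^X$ sets $A_j$ under an unbounded quantifier.
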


A careful observation of the proof shows that it contains the proof of $\ll^2$-basis theorem for $\DAN[2]$ within $\IN[2]$.

\begin{proof}
Let $(M, \cS)$ be a model of $\BIII$.
Fix $X$, $\Delta^{X}_2$-partition $\la A_i\ra_{i<k}$ of $\mathbb N$ and a set $Z$ such that $X' \ll Z'$ in ${\cS}$.
Consider the model $(M, \Delta^0_2(M,\cS)) \models \RCAo \plus \BN[2]$.

As a preparation for the proof, take two effectively coded $\omega$-models of $\WKLo$ $\effcodf$ and $\effcods$ such that $X' \in \effcodf, \effcodf \ll Z', \effcods \in \effcodf, X \in \effcods$, and $\effcods' \equiv X'$.
Here we use \thmref{thm:coded2} to satisfy $\effcods' = X'$.
We perform Mathias forcing computably in $\effcodf$.
Note that $\effcodf \in \Delta^0_2(M,\cS)$ since $Z' \in \Delta^0_2(M, \cS)$; thus we can use $\mathrm{B}\Sigma^{\effcodf}_2$ in the proof.
We construct a set $B$ such that
(i) infinite,
(ii) for some $i<k$ $B \subseteq A_i$,
and (iii) the jump of $B$ is Turing reducible to the coded $\omega$-model.

We define the notion of Mathias forcing $\mathbb P$ as the sets of tuples $p = (F^0, \dots, F^{k-1}, I)$ satisfying the following.
(i) $F^i \finsub A_i$ for all $i<k$,
(ii) $I \in \effcods$,
(iii) $\max F^i < \min I$ for all $ i<k$n
and (iv) $I$ is infinite.
We call a tuple $p = (F^0, \dots, F^{k-1}, I)$ {\it precondition} when it satisfies (i)-(iii), and call it {\it condition} when it satisfies (i)-(iv).
Recall that $I \in \effcods$ is represented by indices of $\effcods$.
We define a partial order $\preceq$ on $\mathbb P$ as follows.
\[
(F^0, \dots, F^{k-1}, I) \preceq (E^0, \dots, E^{k-1}, H)
:\Leftrightarrow \bigwedge_{i<k} (E^i \subseteq F^i \land F^i \setminus E^i \subseteq H) \land I \subseteq H.
\]

We consider the following statements for a precondition $p = \condition[\empty]$.

\begin{enumerate}
  \item[($\Eposi$)] $|F^i| \geq n$,
  \item[($\Enega$)] $\not \exists D \finsub I (|F^i \cup D| \geq n)$,
  \item[($\posireq$)] $\Phi_{e,\max F^i}^{F^i}(e) \defined$,
  \item[($\negareq$)] $\not \exists D \finsub I \Phi_e^{F^i \cup D}(e) \defined$.
\end{enumerate}

\begin{dfn}
  Let $\{ \enuposi{e}{\empty} \}_e$ be an enumeration of $E_n^i$ and $R_e^i$ defined below.
  \[
  \enuposi{(2e')}{\empty} = \Eposi[e'],
  \enuposi{(2e'+1)}{\empty} = \posireq[e'].
  \]
  We order $\{ \enuposi{e}{\empty} \}_e$ by $\la e, i \ra$.
  For a (pre)condition $p = \condition[\empty]$ and $\enuposi{e}{\empty}$ we define the following terminology.
  \begin{enumerate}
    \item $p$ {\it positively forces} $\enuposi{e}{\empty}$ when $F^i$ satisfies $\enuposi{e}{\empty}$,
    \item $p$ is {\it compatible with} $\enuposi{e}{\empty}$ when $\exists E \finsub I$ $(F^i \cup E, I)$ satisfies $\enuposi{e}{\empty}$,
    \item $p$ {\it negatively forces} $\enuposi{e}{\empty}$ when $p$ is not compatible with $\enuposi{e}{\empty}$.
    In this case $(F^i, I)$ satisfies $\Enega[e/2]$ or $\negareq[(e-1)/2]$,
    \item $\enuposi{e}{\empty}$ is {\it decided by} $p$ when $p$ positively forces $\enuposi{e}{\empty}$ or $p$ negatively forces $\enuposi{e}{\empty}$.
  \end{enumerate}
\end{dfn}

\begin{rem}
  Note that for a given $p=\condition[\empty]$, the statement ``$\enuposi{e}{\empty}$ is decided by $p$'' is $\Pi^{I}_1$, and thus $\effcodf$ can judge whether it holds or not.
\end{rem}

At stage $s+1$, for a given $p_s = \condition[s] \in \mathbb P$,
let $\stageindex{e}{s+1}$ be the least $\stageindex{e}{\empty} < s$ such that $\enuposi{e}{\empty}$ is not decided by $p_{s}$.
We ask the following $\Sigma_1^{I_s}$ question which $\effcodf$ can answer.
\[
  \exists t (\forall \la C_i \ra_{i < k} \text{: partition of } I_s \restrict t )(\exists i <k) ((F_{s}^0, \dots, F_s^{k-1}, I_s \cap C_i) \text{ is compatible with } \enuposi{e}{s+1}).
\]
Depending on the answer, we proceed with the construction.

{\bf Case 1} The answer is {\sf Yes}.
Fix a $t$ such that $(\forall \la C_i \ra_{i < k}$: partition of $I_s \restrict t)$ $(\exists i <k) (F_{s}^0, \dots, F_s^{k-1},$ $I_s \cap C_i)$ is compatible with $\enuposi{e}{s+1}$.
Since $\la A_i \cap I_s \restrict t \ra_{i<k}$ is a $k$-partition of $I_s \restrict t$,
there exists $i<k$ such that $(F_{s}^0, \dots, F_s^{k-1}, A_i \cap I_s\restrict t)$ is compatible with $\enuposi{e}{s+1}$.
Fix a finite set $E \subseteq A_i \cap I_s \restrict t$ which satisfies $\enuposi{e}{s+1}$.
Then the following $p_{s+1}$ is a suitable extension which positively forces $\enuposi{e}{s+1}$ regardless of whether $\stageindex{e}{s+1}$ is even or odd.
\[
F_{s+1}^j=
\begin{cases}
  F_s^j & ( j \neq i)\\
  F_s^i \cup E & ( j = i)
\end{cases},~~
I_{s+1} = I_s \setminus [0, \max E].
\]

{\bf Case 2} The answer is {\sf No}.
With the same discussion in the proof of \thmref{thm:basisforEM},
we can take $\la C_i \ra_{i<k}$ a partition of $I_s$ such that
for all $i<k$ $(F_s^0, \dots, F_s^{k-1}, I_s \cap C_i)$ is not compatible with $\enuposi{e}{s+1}$.
It implies $(F_s^0, \dots, F_s^{k-1}, I_s \cap C_i)$ satisfies $E_{{\stageindex{e}{s+1}}/2}^{i,-}$ or $N_{(\stageindex{e}{s+1})/2}^i$ for each $i<k$.
Pick an $i<k$ such that $I_s \cap C_i$ is infinite and take an appropriate extension $p_{s+1} = (F_s^0, \dots, F_s^{k-1}, I_s \cap C_i)$ which negatively forces $\enuposi{e}{s+1}$.
Note that in this case $\enuposi{e}{s+1}$ must be $N_{(\stageindex{e}{s+1}-1)/2}$ since $I_s \cap C_i$ is infinite.

\begin{rem}
  \begin{enumerate}
    \item In both of Case 1 and Case 2, the following holds for the activated color $i$: $\forall e' \leq \stageindex{e}{s+1}$ $\enuposi{(e')}{\empty}$ is decided by $p_{s+1}$.
    \item From the definition of $\stageindex{e}{s}$'s, at any stage $s$ there exists an $i<k$ such that $\stageindex{e}{s+1} \geq \stageindex{e}{s} + 1$.
    Therefore, $(\sum_{i<k} \stageindex{e}{s}) \leq s$ holds for each $s$.
  \end{enumerate}
\end{rem}

From the construction and the above remarks, the following holds for the constructed sequence $\la p_s \ra_s$.

\begin{equation}\label{eq:prewanted}
  \forall e \exists i<k \exists s \forall e' \leq \stageindex{e}{s} \enuposi{(e')}{\empty} \text{ is decided by } p_s.
\end{equation}

Since the statement ``$\enuposi{(e')}{\empty}$ is decided by $p_s$" is $\Pi^{\effcods}_1$, we can apply $\mathrm{B}\Sigma_2^{\effcodf}$ to (\ref{eq:prewanted}) to get the following.

\begin{equation}\label{eq:appliedBII}
  \exists i<k \forall e \exists s \enuposi{e}{\empty} \text{ is decided by } p_s.
\end{equation}

Fix a color $i$ selected in (\ref{eq:appliedBII}), $p_s$ always positively forces $E_e^{i,+}$ whenever $E_e^{i,+}$ is decided by $p_s$.
This is because $E_e^{i,-}$ is never forced by $p_s$ in case $I_s$ is infinite.

After all, the following holds for the sequence $\la p_s \ra_s$.
\[
\forall e,n \exists s p_s \text{ forces } (R_e^i \lor N_e^i) \land E_n^{i,+}.
\]

Then for the selected color $i$, $B = \cup_sF_s^i$ is a desired set for \thmref{thm:basisforD2}.
From the construction, $B \subseteq A_i$ is clear.
The arguments for the infiniteness and the Turing reducibility of its jump are the same as in the proof of \thmref{thm:basisforCOH}.
\end{proof}

\begin{rem}
  If $k=2$ we do not have to use $\mathrm{B}\Sigma_2^{\effcodf}$ to select a color.
  So the construction and verifications are accomplished within $\mathrm{I}\Sigma_1^{\effcodf}$.
  That is, $\IN[2]$ is strong enough to prove the $\ll^2$-basis theorem for $\DAN[2]$.
\end{rem}

\section{Conclusions}\label{conclusions}

In this paper we constructed $\ll^2$-solutions to $\RTtt$ within $\IN[2]$, $\EMinf$ within $\IN[2]$, and $\RTt$ within $\BIII$ (\thmref{thm:headerforbasistheorem}).
With a slight insight, we find that these results give simpler proofs of the following known conservation results.

\begin{cor}[Towsner and Yokoyama \cite{EM}]
$\WKLo+\EMinf+\RTtt+\IN[2]$  is a $\Pi^{1}_{1}$-conservative extension of $\RCAo+\IN[2]$.
\end{cor}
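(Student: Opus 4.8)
The plan is to derive the $\Pi^1_1$-conservativity from the $\ll^2$-basis theorems established in \thmref{thm:headerforbasistheorem} via the standard iterated-forcing / model-extension argument, following the framework of \cite{iso}. Suppose toward a contradiction that $\RCAo+\IN[2]+\WKLo+\EMinf+\RTtt$ proves a $\Pi^1_1$ sentence $\varphi$ that fails in some model $(M_0,\cS_0)\models\RCAo+\IN[2]$. Fix a witness $A_0\in\cS_0$ to the failure of $\varphi$; since $\varphi$ is $\Pi^1_1$, $\neg\varphi$ is witnessed by a single set, and this witness is preserved upward under set-theoretic extensions of $\cS_0$ that keep the same first-order part $M_0$. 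The goal is then to build an extension $(M_0,\cS^\ast)\models\RCAo+\IN[2]+\WKLo+\EMinf+\RTtt$ with $\cS_0\subseteq\cS^\ast$, which contradicts the assumed provability of $\varphi$.

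The construction of $\cS^\ast$ proceeds by a countable iteration. First I would pass to a suitable ``$\ll$-ceiling'': since $(M_0,\cS_0)\models\IN[2]$, for $A_0\in\cS_0$ we have $A_0'\ll C$ for an appropriate $C$ (using \propref{prop:rephrase} and the jump-inversion setup), so we may start the iteration with a pair $A\ll^2 Z$ in hand. Then, enumerating in order all instances of $\WKL$, $\EMinf$, and $\RTtt$ (the latter two reduced, via \propref{prop:decomposition} and $\RTtt\leftrightarrow\COH+\DAN[2]$, to instances of $\COH$, $\DAN[2]$, and instances handled by \thmref{thm:basisforEM}), at each step I apply the relevant $\ll^2$-basis theorem — \thmref{thm:basisforCOH}, \thmref{thm:basisforEM}, and \thmref{thm:basisforD2} with $k=2$, together with the low basis theorem \thmref{thm:lowbasis} for the $\WKL$ instances — to adjoin a solution $\widetilde Y$ with $\theta(X,\widetilde Y)$ and $(X\oplus\widetilde Y)\ll^2 Z$, while keeping $\IN[2]$ for the new set by \thmref{thm:rel-induction}.5 (the solutions are low relative to the previous data, which preserves $\IN[2]=\IN[1+1]$). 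Crucially, each step keeps all previously adjoined sets, so $\cS_0$ survives; taking the union $\cS^\ast$ over the iteration and checking that the three principles hold there (every instance appears in the enumeration at some finite stage with its solution adjoined immediately after) gives the desired model.

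The main obstacle — and the reason the paper's $\ll^2$-basis theorems are phrased exactly as they are — is the \emph{closure under iteration with preservation of $\IN[2]$}. One must verify that along the countable iteration the $\ll^2$-relation is preserved (so that the next basis theorem can be applied), that $\IN[2]$ is maintained at every stage and in the limit (the limit step is where $\BN[3]$-type bookkeeping would be needed in general, but here $\IN[2]$ suffices precisely because $k=2$ avoids the $\mathrm{B}\Sigma_2^{\effcodf}$ usage noted in the final remark, and because $\IN[2]$ is $\Pi^1_1$-preserved under the union of an $\omega$-chain of models each satisfying it), and that the enumeration of instances can itself be carried out inside the ambient structure. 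All of this is the content of the ``$\Pi^1_1$-conservation $\Leftrightarrow$ $\ll^n$-basis theorem'' correspondence of \cite{iso}; invoking that correspondence, the corollary follows immediately from \thmref{thm:headerforbasistheorem} once one observes that $\WKL$ is handled by the classical low basis theorem and that $\EMinf$ and the two splits of $\RTtt$ are all covered at the $\IN[2]$ level.
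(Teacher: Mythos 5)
Your overall architecture is the right one and matches what the paper intends: iterate the $\ll^2$-basis theorems from \thmref{thm:headerforbasistheorem} (together with the low basis theorem for $\wkl$) along a countable chain $\cS_0\subseteq\cS_1\subseteq\cdots$ with a fixed ``$\ll$-ceiling'', and take the union. But the step where you justify that $\IN[2]$ is maintained is wrong as stated, and this is precisely the delicate point of the argument.

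You write that ``the solutions are low relative to the previous data, which preserves $\IN[2]$'' and you cite \thmref{thm:rel-induction}.5. Neither half is correct. First, the $\ll^2$-basis theorems for $\COH$, $\EMinf$, $\DAN[2]$ do \emph{not} produce solutions low over the instance: they produce a $\Delta^{Z}_2$ set $\widetilde Y$ with $(X\oplus\widetilde Y)'\ll Z'$, which says nothing about $(X\oplus\widetilde Y)'\leq_{\T}X'$; in general the jump of the new set climbs all the way up toward $Z'$. Second, even if the new set were low$_m$ relative to $X$, \thmref{thm:rel-induction}.5 only yields $\Bn{X\oplus\widetilde Y}{2}$, i.e.\ $\BN[2]$, not $\IN[2]$, and $\BN[2]$ in the limit model would not contradict provability of $\varphi$ from $\IN[2]$.

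The correct preservation argument uses the ceiling, not lowness. After passing to the rephrased form (\propref{prop:rephrase}), one must first construct $C$ with $A_0'\ll C$ \emph{and} $\In{C}{1}$ --- this comes from \thmref{thm:lowbasis} applied to $A_0'$ (which is licensed because $\In{A_0}{2}$ gives $\In{A_0'}{1}$), taking $C=A_0'\oplus W$ for the low path $W$ through the universal tree. Then at every stage $(X_n\oplus\widetilde Y)'\ll C$ forces $(X_n\oplus\widetilde Y)'\leq_{\T}C$, and \thmref{thm:rel-induction}.1 with \thmref{thm:rel-induction}.3 give $\In{X_n\oplus\widetilde Y}{2}$ from $\In{C}{1}$. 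This is what keeps $\IN[2]$ (not merely $\BN[2]$) at every successor stage; your observation that it then survives the union of the $\omega$-chain is correct, since $\IN[2]$ is $\Pi^1_1$. Two smaller points: the parenthetical attributing the use of $\IN[2]$ rather than $\BN[3]$ at the limit step to ``$k=2$ avoiding $\mathrm{B}\Sigma_2^{\effcodf}$'' conflates two unrelated things --- that remark in the paper concerns the internal proof of \thmref{thm:basisforD2}, not the external chain argument; and you do not actually need $\cS_0\subseteq\cS^{\ast}$, only that the single witness $A_0$ (and everything recursive in the growing cone) survives, which the construction gives for free.
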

\begin{cor}[Slaman and Yokoyama \cite{SlYo}]
$\WKLo+\RTt$ is a $\Pi^{1}_{1}$-conservative extension of $\RCAo+\BN[3]$.
\end{cor}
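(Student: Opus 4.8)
The plan is to obtain this as an instance of the correspondence of Fiori-Carones, Ko\l{}odziejczyk, Wong and Yokoyama \cite{iso} between $\ll^n$-basis theorems and $\Pi^1_1$-conservation over $\BN[n+1]$, fed with the $\ll^2$-basis theorem for $\RTt$ established over $\BIII$ in \thmref{thm:headerforbasistheorem}.3; one instantiates $n = 2$, base theory $\BN[3]$, and principle $P = \RTt$. The only ``slight insight'' needed is to identify the resulting conclusion: the $\ll^n$-machinery is built on coded $\omega$-models of $\WKLo$ (\thmref{thm:coded}, \thmref{thm:coded2}), so $\WKLo$ is automatically validated, while \thmref{thm:headerforbasistheorem}.3 supplies the $\RTt$ part; since $\RTt \to \BIII$ (Hirst), $\WKLo + \RTt$ already proves $\RCAo + \BN[3]$, and the correspondence delivers exactly $\Pi^1_1$-conservativity over it.

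Unpacking the underlying model construction: suppose $\RCAo + \BN[3] \nvdash \varphi$ for a $\Pi^1_1$ sentence $\varphi = \forall X\,\psi(X)$ with $\psi$ arithmetical, and fix a countable model $(M,\cS) \models \RCAo + \BN[3] + \neg\varphi$ together with a witness $X_0 \in \cS$, $\neg\psi(X_0)$. It suffices to build a second-order part $\cS^* \ni X_0$ with $(M,\cS^*) \models \WKLo + \RTt$, for then $(M,\cS^*) \models \neg\varphi$ because $\psi$ is arithmetical. One builds $\cS^*$ by iteration: fixing (via jump inversion, cf.~\propref{prop:rephrase}) a ``ceiling'' against which all sets produced will be kept $\ll^2$-small, enumerate all demands --- codes for $\RTt$-instances and for infinite $0 \hyphen 1$ trees, each tagged with finitely many previously constructed sets as parameters --- and process them one at a time, building a $\leq_{\T}$-increasing chain $X_0 = A_0 \leq_{\T} A_1 \leq_{\T} \cdots$ in which $A_{n+1}$ joins onto some $A_j$ a solution $B_n$ to the $n$-th demand: for $\RTt$-demands $B_n$ comes from \thmref{thm:headerforbasistheorem}.3, for tree-demands from the $\ll^2$-basis theorem for $\WKLo$ (equivalently \thmref{thm:lowbasis}), and in both cases \propref{prop:rephrase} lets us keep the ceiling bound on $(A_{n+1})'$. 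Closing $\{A_n\}_n$ under $\leq_{\T}$ and $\oplus$ to form $\cS^*$, every $\RTt$-instance and every $0 \hyphen 1$ tree in $\cS^*$ lies below some $A_j$ and was therefore resolved at a later stage, so $(M,\cS^*) \models \RCAo + \WKLo + \RTt$; equivalently one may close under $\COH$- and $\DAN[\empty]$-solutions and invoke \propref{prop:decomposition}.

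The main obstacle --- and exactly the point where the weakly-low ($\ll^2$) control, rather than bare solvability, is indispensable --- is to check that $\BN[3]$ persists into the limit, i.e.~that $\Bn{A}{3}$ holds for every $A \in \cS^*$; this is both what keeps each successive appeal to \thmref{thm:headerforbasistheorem}.3 legitimate and what the \cite{iso} framework is engineered to secure. Since every $A \in \cS^*$ has its jump $\ll$ the one fixed ceiling, and $\mathrm{B}\Sigma_3$-bounding propagates from the ground model along such weakly-low extensions via the $\ll$-analogue of clauses 5 and 6 of \thmref{thm:rel-induction}, the bounding survives uniformly precisely because a single ceiling serves all stages at once. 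The remaining bookkeeping --- choosing that ceiling so as to respect $\BN[3]$, and pinning down the exact model in which each step of the recursion is carried out --- is the technical heart encapsulated by \cite{iso}; accordingly, the cleanest write-up is simply to cite that correspondence and substitute \thmref{thm:headerforbasistheorem}.3.
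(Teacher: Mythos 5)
Your proposal is correct and follows essentially the same route the paper takes: the corollary is meant to drop out of \thmref{thm:headerforbasistheorem}.3 together with the $\ll^2$-basis/$\Pi^1_1$-conservation correspondence of \cite{iso}, which is exactly what you invoke. Your unpacked model construction (fix a ceiling via jump inversion and \propref{prop:rephrase}, iterate the basis theorems for $\RTt$ and $\WKLo$ while keeping every stage $\ll^2$-small under that ceiling, and observe that $\BN[3]$ persists because bounding transfers along the fixed ceiling as in \thmref{thm:rel-induction}) is a faithful account of the ``slight insight'' the paper alludes to.
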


Indeed, we extend these conservation results to the results onproof transformations and proof size.
More specifically, we can show the following type of statement.

\begin{claim}\label{claim:forthcoming1}
  There exists a polynomial-time procedure which for a given $\BIII$-proof of $\RTt$ outputs a $\BIII$-proof of $\RTt$.
\end{claim}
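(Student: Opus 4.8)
This is the proof-theoretically effective form of the conservativity of $\WKLo + \RTt$ over $\RCAo + \BN[3]$, and the plan is to distil from Section~\ref{constructions} a single \emph{fixed} forcing interpretation of $\WKLo + \RTt$ into $\RCAo + \BN[3]$ and then to use the standard fact that translating a proof along a fixed interpretation is computable in polynomial time. The formalized-forcing machinery behind the isomorphism theorem for $\WKLs$ \cite{iso} provides the template; the new input is that the low-like basis theorems proved here are first-jump-control constructions, so the soundness of the interpretation can be verified in $\RCAo + \BN[3]$ rather than in a system with more first-order induction.

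First I would formalize, uniformly, an $\omega$-length iterated Mathias forcing $\mathbb{P}$ --- definable over $\RCAo + \BN[3]$ and carried out inside an effectively coded $\omega$-model of $\WKLo$ as furnished by \thmref{thm:coded2} --- that adds, to a given ground model, solutions to all instances of $\COH$ and of $\DAN[\empty]$. By \propref{prop:decomposition} the generic extension then satisfies $\RTt$, and by \lemref{lem:ll} and \propref{prop:rephrase} the genericity, density and jump-control lemmas needed are precisely those appearing in the proofs of \thmref{thm:basisforCOH} and \thmref{thm:basisforD2}; the point is that all of them go through already in $\RCAo + \BN[3]$. One then defines the forcing relation $p \Vdash \psi$ in the usual way and proves, once and for all, a forcing-soundness theorem: $\RCAo + \BN[3]$ proves ``$1_{\mathbb{P}} \Vdash \psi$'' for each of the finitely many axioms $\psi$ of $\WKLo + \RTt$, and $\Pi^1_1$ sentences are preserved downward from the generic extension to the ground model (the generic adds no natural numbers, and the matrix of a $\Pi^1_1$ sentence refers only to ground-model sets).

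Second, I would package this as a fixed interpretation $I$: it maps a formula $\psi$ to its forced translation $I(\psi)$, and it comes equipped with fixed $\RCAo + \BN[3]$-proofs of $I(\psi)$ for each axiom $\psi$ of $\WKLo + \RTt$, together with a fixed $\RCAo + \BN[3]$-proof of $I(\varphi) \leftrightarrow \varphi$ for every $\Pi^1_1$ sentence $\varphi$ (these last are uniform in $\varphi$ because $I$ is the identity on arithmetical formulas and on set quantifiers restricted to ground-model sets). Given a $\WKLo + \RTt$-proof $\pi$ of a $\Pi^1_1$ sentence $\varphi$, one replaces each line $\psi$ of $\pi$ by $I(\psi)$, grafts in the fixed proofs at the leaves, re-derives each logical inference line by line (since $I$ commutes with the connectives and relativizes quantifiers), and composes with the fixed proof of $I(\varphi) \leftrightarrow \varphi$, obtaining a $\RCAo + \BN[3]$-proof of $\varphi$. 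Since $|I(\psi)| = O(|\psi|)$, the inserted proofs have size independent of $\pi$, and the number of added lines is linear in the length of $\pi$, so the whole procedure runs in time polynomial in $|\pi|$; recalling that $\RCAo + \BN[3]$ is $\Pi^1_1$-conservative over $\BN[3]$ (see the Remark in Section~\ref{pre}), the output may be presented as a $\BN[3]$-proof.

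The hard part will be the first step: obtaining a genuinely \emph{fixed}, $\RCAo + \BN[3]$-provably sound forcing interpretation. The iteration that kills all instances of $\COH$ and $\DAN[\empty]$ must be a single uniform construction, and the density schema ``every condition has an extension forcing the $e$-th requirement'' must stay at a complexity that $\BN[3]$ can handle as a scheme --- this is exactly where the bookkeeping of \thmref{thm:basisforD2} is essential, since there the active color is selected using $\mathrm{B}\Sigma_2$ over the coded model rather than $\Sigma_2$-induction, keeping the verification within reach of $\BN[3]$. A secondary, routine-but-lengthy obstacle is the finitary verification of forcing-soundness for the logical rules. The full development, and the precise polynomial bounds, will appear in the forthcoming paper of Ikari, Ko\l{}odziejczyk, and Yokoyama.
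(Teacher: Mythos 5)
The paper itself gives \emph{no proof} of this claim: it is stated as a ``claim,'' accompanied only by the one-line remark that ``we use the method of the forcing interpretations, where we regard basis theorems as completeness in a sense,'' and the text explicitly defers the details to a forthcoming paper with Ko\l{}odziejczyk. So there is no proof in this paper to compare your proposal against. What can be said is that your sketch is consistent with that remark, and also with the structure of \claimref{claim:forthcoming2}, which makes the $\ll^2$-basis theorem the engine driving a ``polynomially $\Pi^1_1$-reflecting forcing interpretation.'' Your plan --- a fixed $\RCAo + \BN[3]$-provably sound forcing interpretation of $\WKLo + \RTt$ built by Mathias forcing inside a coded $\omega$-model, with $\Pi^1_1$-downward reflection and line-by-line translation of proofs --- is exactly the kind of argument that one would extract from the Simpson--Tanaka--Yamazaki template plus Theorems~\ref{thm:basisforCOH} and~\ref{thm:basisforD2}.

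Two cautionary remarks. First, the statement as printed is a tautology (source and target are both ``a $\BIII$-proof of $\RTt$''), so you have had to guess the intended reading; your guess (translating a $\WKLo + \RTt$-proof of a $\Pi^1_1$ sentence into a $\BN[3]$-proof of the same sentence) is the natural one, but it is worth flagging explicitly that you are correcting an apparent typo rather than proving the literal claim. Second, the step you flag as ``the hard part'' --- packaging an $\omega$-length iteration that kills all $\COH$ and $\DAN[\empty]$ requirements into a single fixed interpretation verifiable in $\BN[3]$ --- is precisely where the real content lies: neither \thmref{thm:basisforCOH} nor \thmref{thm:basisforD2} by itself gives a uniform iteration, since each produces one solution and one new $\ll^2$-below set, and one must internalize the recursion so that the density and soundness lemmas become a \emph{single} $\BN[3]$-provable scheme rather than an external induction. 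This is not a gap in the sense of an error, but it is the entire technical burden, and your sketch, like the paper's remark, leaves it open. Finally, the last paragraph invokes $\Pi^1_1$-conservativity of $\RCAo + \BN[3]$ over $\BN[3]$ to strip $\RCAo$; for the quantitative claim one would need a polynomial-time version of that conservativity as well, which you should cite or check.
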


For the proofs we use the method of the forcing interpretations, where we regard basis theorems as completeness in a sense.
As a result, we can also prove the following extended results.
This is a generalization of the result by Simpson, Tanaka and Yamazaki \cite{sty}.

\begin{claim}\label{claim:forthcoming2}
 Let $P$ be a formula of the form $\forall X (\theta(X) \to \exists Y \alpha(X, Y))$ where $\theta$ and $\alpha$ are arithmetical.
  If $\BIII$ proves the $\ll^2$-basis theorem for $P$ then there exists a polynomially $\Pi^1_1$-reflecting forcing interpretation of $\WKLo \plus \BIII \plus P$ in $\BIII$.
\end{claim}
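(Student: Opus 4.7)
The plan is to adapt the forcing interpretation of Simpson, Tanaka and Yamazaki \cite{sty} (which interprets $\WKLo$ in $\RCAo$) by strengthening its conditions using effectively coded $\omega$-models stratified by the $\ll$-relation. In place of the low basis theorem used in \cite{sty} for $\WKLo$-density, I would use the established $\ll^2$-basis theorems from \thmref{thm:headerforbasistheorem} for $\BIII$-density and the hypothesized $\ll^2$-basis theorem for $P$-density.

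First, I would set up conditions as pairs $\la X, \mathcal W \ra$ where $X$ is a set (the part being extended) and $\mathcal W$ is an effectively coded $\omega$-model of $\WKLo$ containing $X$ with $X' \ll \mathcal W$, together with an implicit outer oracle $Z$ (obtainable via \propref{prop:rephrase}) for which $\mathcal W \ll Z$ and $\Bn{Z}{3}$ holds in the ground model. Refinement requires $X \subseteq X'$, $X' \in \mathcal W$, $\mathcal W' \ll \mathcal W$, and $(X')' \ll \mathcal W'$. The forcing translation interprets set quantifiers as ranging over sets coded in the generic $\mathcal W$.

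Second, I would establish forcing-completeness axiom-by-axiom. $\WKLo$ is immediately forced because every coded infinite binary tree already has a path inside $\mathcal W$. The bounding principle $\BIII$ on generic sets is obtained by \thmref{thm:rel-induction}(6): since $\mathcal W \ll Z$ and $\Bn{Z}{3}$, bounding on any set $Y$ with $(X \oplus Y)' \ll \mathcal W$ reduces to the ground-model $\BIII$. Finally, $P$ is forced by directly invoking its assumed $\ll^2$-basis theorem inside $\BIII$: given a condition in which $\theta(X)$ has been forced, produce $Y$ with $\alpha(X,Y)$ and $(X \oplus Y)' \ll \mathcal W'$ for a suitable refinement $\mathcal W'$, using \propref{prop:dense} to manufacture the refined oracle.

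Third, $\Pi^1_1$-reflection follows from the usual argument: any set $A$ of the ground model can be placed inside a trivial condition $\la A, \mathcal W \ra$ via \thmref{thm:coded2}, so a forced universal closure of an arithmetical formula translates into ground-model truth. Polynomiality of the proof transformation then follows because each axiom schema of $\WKLo + \BIII + P$ translates into a proof template of fixed size in $\BIII$, and proof rules commute with the forcing translation up to polynomial-size overhead; this is essentially the same reason the \cite{sty} transformation is polynomial-time.

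The main obstacle is arranging, uniformly and polynomially, the chain of oracles that lets $\BIII$-density succeed on the generic: a naive forcing has no direct control over $\Sigma^0_3$-statements over a generic set, and one must use the $\ll$-gap between the generic and the outer oracle to transfer $\BN[3]$ downward via \thmref{thm:rel-induction}. The key technical content is to encode this oracle-gap inside the conditions from the outset so that the forcing-completeness arguments and the polynomial-time verification all go through without appeal to unbounded search. Handling the interaction of nested set quantifiers in $P$ with this oracle stratification, so that the forcing translation of each instance is a fixed-length $\BIII$-derivation, is the most delicate step, but is tractable because $P$ has the restricted form $\forall X(\theta(X)\to\exists Y\alpha(X,Y))$ with arithmetical $\theta,\alpha$.
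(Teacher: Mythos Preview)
The paper does not actually prove this claim: it explicitly states ``We will show the details in a forthcoming paper with Ko\l{}odziejczyk.'' There is therefore no proof in the paper to compare your proposal against; the paper only records the claims and indicates that the method will be forcing interpretations in the style of \cite{sty}.

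On its own merits, your outline is headed in the right direction and matches the hints the paper gives, but several points are underspecified or slightly off. First, your notation conflates two meanings of the prime symbol: in ``$X \subseteq X'$, $X' \in \mathcal W$, $\mathcal W' \ll \mathcal W$, and $(X')' \ll \mathcal W'$'' you use primes both for the refined condition and for the Turing jump, which makes the refinement relation hard to parse. Second, your appeal to \thmref{thm:rel-induction}(6) for $\BIII$ in the generic extension skips a step: that clause is about low$_m$ sets, whereas your conditions only give $\ll$-relations. You need to observe that $A \ll B$ implies $A \le_{\T} B$ (since $\{A\}$ is itself a $\Pi^A_1$ class), so that $(X\oplus Y)' \ll \mathcal W \ll Z$ yields $(X\oplus Y)' \le_{\T} Z$, and then apply \thmref{thm:rel-induction}(2) and (4) rather than (6). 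Third, the polynomiality claim is asserted by analogy with \cite{sty} but not argued: you need the forcing relation for each atomic and quantifier case to be expressed by a formula of bounded complexity and the density lemmas to have uniform, fixed-size $\BIII$-proofs, which is where the careful packaging of the $\ll^2$-basis theorem as a single sentence becomes important. None of these is a fatal gap, but each would need to be addressed in an actual proof.
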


Combining \claimref{claim:forthcoming1} with the results of this paper, we obtain \claimref{claim:forthcoming2}.
We will show the details in a forthcoming paper with Ko\l{}odziejczyk.

\section*{Acknowledgements}
Ikari was partialy supported by JST, the establishment of university fellowships towards the creation of science technology innovation, Grant Number JPMJFS2102.
Yokoyama is partially supported by
 JSPS KAKENHI grant numbers JP19K03601, JP21KK0045 and JP23K03193.

  \bibliographystyle{plain}
  \bibliography{bibRT_working.bib}
\end{document}